\journal{Journal of Graph Theory}
\newtheorem{theo}{Theorem}[section]
\newtheorem{lem}[theo]{\bf Lemma}
\newtheorem{cor}[theo]{Corollary}
	\gdef\SetFigFont#1#2#3#4#5{%
		\reset@font\fontsize{#1}{#2pt}%
		\fontfamily{#3}\fontseries{#4}\fontshape{#5}%
	\selectfont}%
\begin{document}

\begin{frontmatter}

%% Title, authors and addresses

%% use the tnoteref command within \title for footnotes;
%% use the tnotetext command for the associated footnote;
%% use the fnref command within \author or \address for footnotes;
%% use the fntext command for the associated footnote;
%% use the corref command within \author for corresponding author footnotes;
%% use the cortext command for the associated footnote;
%% use the ead command for the email address,
%% and the form \ead[url] for the home page:
%%
%% \title{Title\tnoteref{label1}}
%% \tnotetext[label1]{}
%% \author{Name\corref{cor1}\fnref{label2}}
%% \ead{email address}
%% \ead[url]{home page}
%% \fntext[label2]{}
%% \cortext[cor1]{}
%% \address{Address\fnref{label3}}
%% \fntext[label3]{}

\title{Planar Hypohamiltonian Graphs on 40 Vertices}
\author{Mohammadreza Jooyandeh}
\ead{reza@jooyandeh.com}
\ead[url]{http://www.jooyandeh.com}
\author{Brendan D. McKay}
\ead{bdm@cs.anu.edu.au}
\ead[url]{http://cs.anu.edu.au/~bdm}
\address{Research School of Computer Science, Australian National University, ACT 0200, Australia}
\author{Patric R. J. \"{O}sterg{\aa}rd}
\ead{patric.ostergard@aalto.fi}
\author{Ville H. Pettersson}
\ead{ville.pettersson@aalto.fi}
\address{Department of Communications and Networking, Aalto University School of Electrical Engineering, P.O. Box 13000, 00076 Aalto, Finland}
\author{Carol T. Zamfirescu}
\ead{czamfirescu@gmail.com}
\address{Fakult\"{a}t f\"{u}r Mathematik, Technische Universit\"{a}t Dortmund, 44227 Dortmund, Germany}

%% use optional labels to link authors explicitly to addresses:
%% \author[label1,label2]{<author name>}
%% \address[label1]{<address>}
%% \address[label2]{<address>}

\begin{abstract}
A graph is hypohamiltonian if it is not Hamiltonian, but the deletion of any single vertex gives a Hamiltonian graph. Until now, the smallest known planar hypohamiltonian graph had 42 vertices, a result due to Araya and Wiener. That result is here improved upon by 25 planar hypohamiltonian graphs of order 40, which are found through computer-aided generation of certain families of planar graphs with girth 4 and a fixed number of \mbox{4-faces.} It is further shown that planar hypohamiltonian graphs exist for all orders greater than or equal to 42. If Hamiltonian cycles are replaced by Hamiltonian paths throughout the definition of hypohamiltonian graphs, we get the definition of hypotraceable graphs. It is shown that there is a planar hypotraceable graph of order 154 and of all orders greater than or equal to~156. We also show that the smallest planar hypohamiltonian graph of girth~5 has 45 vertices.
\end{abstract}

\begin{keyword}
%% keywords here, in the form: keyword \sep keyword
graph generation \sep Grinberg's theorem \sep hypohamiltonian graph \sep hypotraceable graph \sep planar graph

%% MSC codes here, in the form: \MSC code \sep code
%% or \MSC[2008] code \sep code (2000 is the default)
\MSC[2010] 05C10 \sep 05C30 \sep 05C38 \sep 05C45 \sep 05C85
\end{keyword}

\end{frontmatter}

%%
%% Start line numbering here if you want
%%
% \linenumbers

%% main text

\section{Introduction}\label{sec:introduction}
\noindent A graph $G = (V,E)$ is called \emph{hypohamiltonian} if it is not Hamiltonian, but the deletion of any single vertex $v \in V$ gives a Hamiltonian graph. The smallest hypohamiltonian graph is the Petersen graph, and all hypohamiltonian graphs with fewer than 18 vertices have been classified \cite{AMW}: there is one such graph for each of the orders 10, 13, and 15, four of order 16, and none of order~17. Moreover, hypohamiltonian graphs exist for all orders greater than or equal to 18.

Chv\'{a}tal \cite{C} asked in 1973 whether there exist \emph{planar} hypohamiltonian graphs, and there was a conjecture that such graphs might not exist \cite{Gru}. However, an infinite family of planar hypohamiltonian graphs was later found by Thomassen \cite{T4}, the smallest among them having order 105. This result was the starting point for work on finding the smallest possible order of such graphs, which has led to the discovery of planar hypohamiltonian graphs of order 57 (Hatzel \cite{H} in 1979), 48 (C.~Zamfirescu and T.~Zamfirescu \cite{ZZ} in 2007), and 42 (Araya and Wiener \cite{WA} in 2011). These four graphs are depicted in Figure~\ref{fig:records}.

\begin{figure}[htbp]
\begin{center}
\begin{minipage}[b]{0.24\linewidth}
\centering
\includegraphics[height=3.2cm]{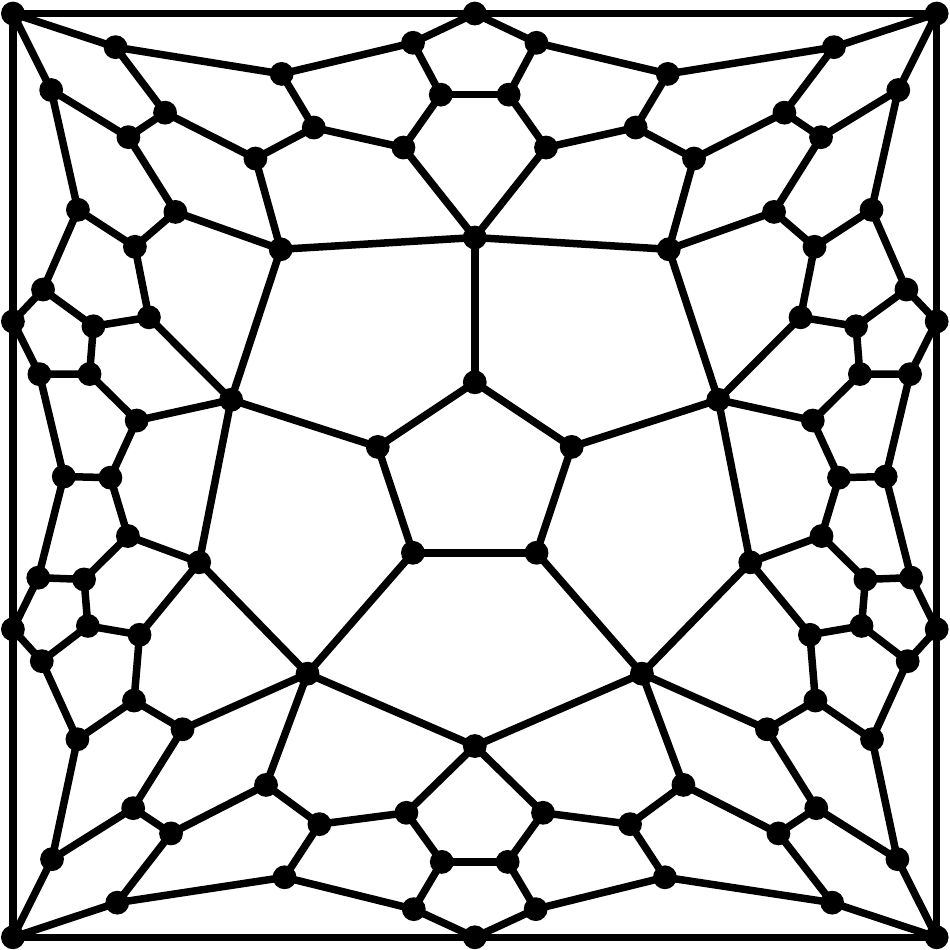}
\end{minipage}
\begin{minipage}[b]{0.24\linewidth}
\centering
\includegraphics[height=3.2cm]{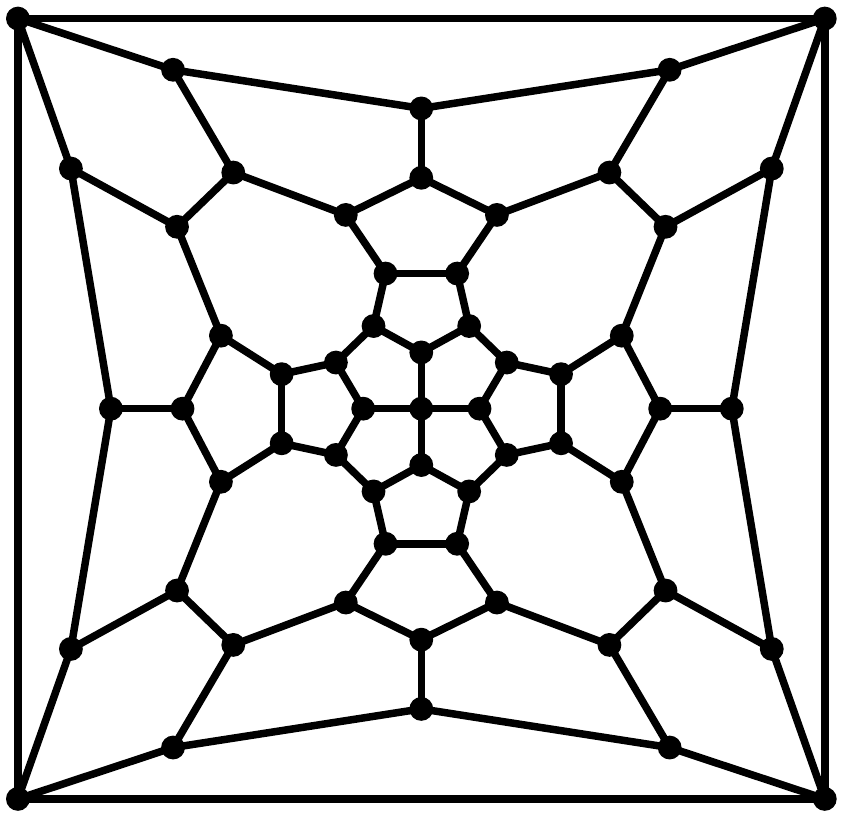}
\end{minipage}
\begin{minipage}[b]{0.24\linewidth}
\centering
\includegraphics[height=3.2cm]{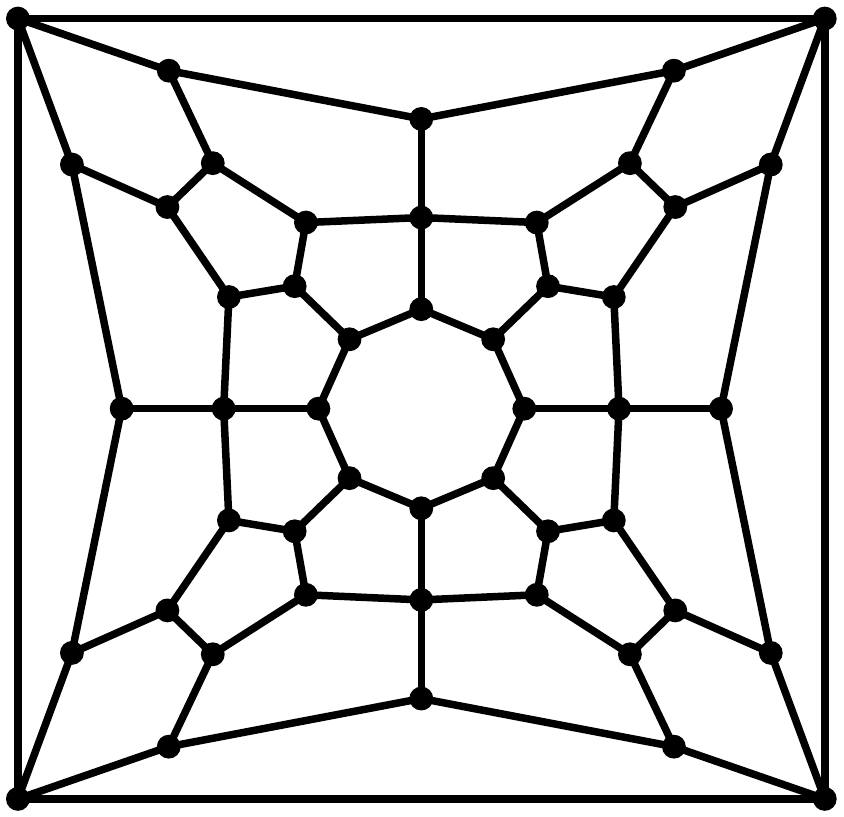}
\end{minipage}
\begin{minipage}[b]{0.24\linewidth}
\centering
\includegraphics[height=3.2cm]{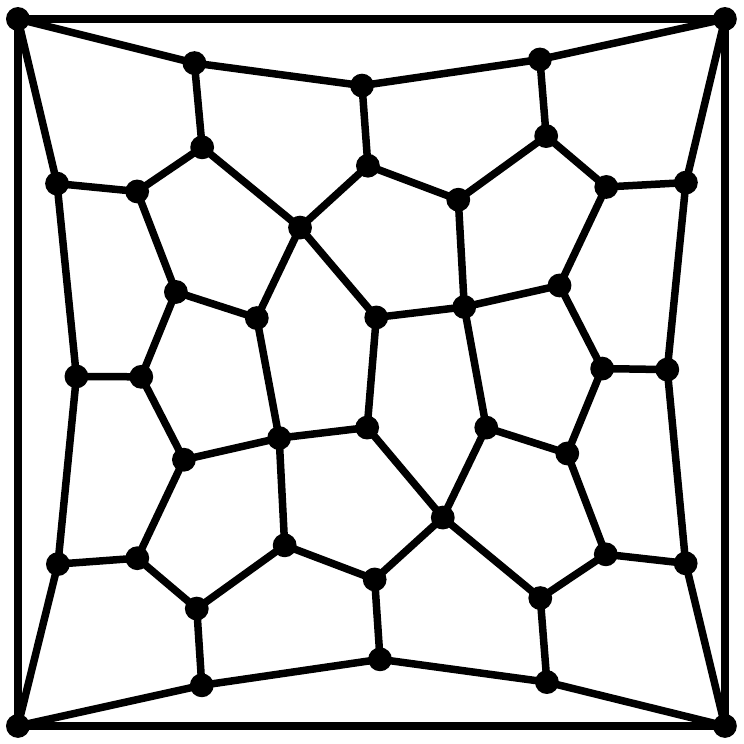}
\end{minipage}
\end{center}
\caption{Planar hypohamiltonian graphs of order 105, 57, 48, and 42}
\label{fig:records}
\end{figure}

Grinberg \cite{G} proved a necessary condition for a plane graph to be Hamiltonian. All graphs in Figure~\ref{fig:records} have the property that one face has size 1 modulo 3, while all other faces have size 2 modulo 3. Graphs with this property are natural candidates for being hypohamiltonian, because they do not satisfy Grinberg's necessary condition for being Hamiltonian. However, we will prove that this approach cannot lead to hypohamiltonian graphs of order smaller than 42. Consequently we seek alternative methods for finding planar hypohamiltonian graphs. In particular, we construct a certain subset of graphs with girth 4 and a fixed number of faces of size 4 in an exhaustive way. This collection of graphs turns out to contain 25 planar hypohamiltonian graphs of order 40.

In addition to finding record-breaking graphs of order 40, we shall prove that planar hypohamiltonian graphs exist for all orders greater than or equal to 42 (it is proved in \cite{WA} that they exist for all orders greater than or equal to~76). Similar results are obtained for \emph{hypotraceable} graphs, which are graphs that do not contain a Hamiltonian path, but the graphs obtained by deleting any single vertex do contain such a path. We show that there is a planar hypotraceable graph of order 154 and of all orders greater than or equal to~156; the old records were 162 and 180, respectively~\cite{WA}.

T.~Zamfirescu defined $\overline{C^i_k}$ and $\overline{P^i_k}$ to be the smallest order for which there is a planar $k$-connected graph such that every set of $i$ vertices is disjoint from some longest cycle and path, respectively \cite{Z2}. Some of the best bounds known so far were $\overline{C^1_3}\leq 42$, $\overline{C^2_3}\leq 3701$, $\overline{P^1_3}\leq 164$, and $\overline{P^2_3}\leq 14694$, which were found based on a planar hypohamiltonian graph on 42 vertices \cite{WA}. We improve upon these bounds using our graphs to $\overline{C^1_3}\leq 40$, $\overline{C^2_3}\leq 2625$, $\overline{P^1_3}\leq 156$, and $\overline{P^2_3}\leq 10350$.

The paper is organized as follows. In Section~\ref{sec:grinbergian} we define Grinbergian graphs and prove theorems regarding their hypohamiltonicity. In Section~\ref{sec:generation} we describe generation of certain planar graphs with girth 4 and a fixed number of faces of size~4, and show a summary of hypohamiltonian graphs found among them. In Section~\ref{sec:corollaries} we present various corollaries based on the new hypohamiltonian graphs. The paper is concluded in Section~\ref{sec:con}.

\section{Grinbergian graphs}\label{sec:grinbergian}
\noindent Consider a plane hypohamiltonian graph $G = (V,E)$.
Since the existence of Hamiltonian cycles is not affected by loops or parallel edges, we can assume that $G$, and all graphs in this paper unless otherwise indicated, are simple.
Let $\kappa(G)$, $\delta(G)$, and $\lambda(G)$ denote the vertex-connectivity, minimum degree, and edge-connectivity of $G$, respectively. We will tacitly use the following fact.

\begin{theo}
$\kappa(G) = \lambda(G) = \delta(G) = 3$.
\end{theo}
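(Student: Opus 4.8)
The plan is to use the standard chain of inequalities $\kappa(G) \le \lambda(G) \le \delta(G)$ (Whitney's inequality), which reduces the statement to two lower bounds, $\kappa(G) \ge 3$ and $\delta(G) \ge 3$, together with a single upper bound, $\delta(G) \le 3$. The two lower bounds already hold for every hypohamiltonian graph, so planarity will only be needed for the upper bound.

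For $\delta(G) \ge 3$, I would argue by contradiction on a vertex $v$ of small degree. If $\deg(v) \in \{0,1\}$ then, since a hypohamiltonian graph has at least ten vertices, deleting a vertex other than $v$ and its (at most one) neighbour leaves $v$ of degree at most $1$, so the resulting graph cannot be Hamiltonian. If $\deg(v) = 2$ with neighbours $a$ and $b$, then in $G - a$ the vertex $v$ has degree $1$, so $G - a$ is not Hamiltonian, contradicting hypohamiltonicity. Hence $\delta(G) \ge 3$. For $\kappa(G) \ge 3$ I would first note that $G$ has no cut vertex: if $v$ were one, then $G - v$ would be disconnected and thus not Hamiltonian. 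To exclude a $2$-cut $\{u,v\}$, observe that it separates $G$ into parts each containing a vertex outside $\{u,v\}$; then in $G - u$ the single vertex $v$ still separates these parts, so $v$ is a cut vertex of $G - u$ and $G - u$ fails to be $2$-connected, hence is not Hamiltonian---again a contradiction. Therefore $G$ is $3$-connected and $\kappa(G) \ge 3$.

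It remains to prove $\delta(G) \le 3$, and this is where I expect the main difficulty to lie and where planarity is essential. A clean way to pin down the connectivity is Tutte's theorem that every $4$-connected planar graph is Hamiltonian: since $G$ is planar and non-Hamiltonian, $G$ is not $4$-connected, so $\kappa(G) \le 3$ and hence $\kappa(G) = 3$. For the minimum degree I would fix a vertex $v$ of minimum degree, take a Hamiltonian cycle $C$ of $G - v$, and exploit that $v$ is the \emph{only} vertex of $G$ not on $C$: in a plane embedding every remaining edge is either an edge of the star at $v$ or a chord of $C$ drawn in one of the faces cut off by that star or in the outer region, and all these chords are pairwise non-crossing. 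Counting the non-crossing chords that the hypothesis $\delta(G) \ge 4$ would force against the number that planarity permits is the crux; carrying this out---and in particular using that \emph{every} single-vertex deletion is Hamiltonian, not just the deletion of $v$---is the delicate step, and the cleanest resolution is the structural fact that a planar hypohamiltonian graph always contains a cubic vertex. Once $\delta(G) \le 3$ is established, the chain $3 \le \kappa(G) \le \lambda(G) \le \delta(G) \le 3$ forces $\kappa(G) = \lambda(G) = \delta(G) = 3$.
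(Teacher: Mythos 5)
Your proposal is correct and takes essentially the same route as the paper: both arguments rest on Whitney's inequality $\kappa \le \lambda \le \delta$, Tutte's theorem that every $4$-connected planar graph is Hamiltonian (giving $\kappa(G) \le 3$), and, for the one genuinely hard step $\delta(G) \le 3$, the fact that every planar hypohamiltonian graph contains a cubic vertex --- which is precisely Thomassen's theorem, cited (not proved) in the paper. Your elementary derivations of $\kappa(G) \ge 3$ and $\delta(G) \ge 3$ are fine (the paper dispatches these in one line), but be aware that your sketched chord-counting attack on $\delta(G) \le 3$ is not a proof and is not needed; the correct move is exactly the one you fall back on, namely citing that structural result.
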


\begin{proof}
Since the deletion of any vertex in $V$ gives a Hamiltonian graph, we have $\kappa(G) \geq 3$.
%Tutte \cite{Tu2} proved that every 4-connected planar graph is Hamiltonian, so $\kappa(G) \leq 3$.
Thomassen \cite{T3} showed that $V$ must contain a vertex of degree 3, so $\delta(G) \leq 3$.
The result now follows from the fact that $\kappa(G)\le\lambda(G)\le\delta(G)$.
\end{proof}

The set of vertices adjacent to a vertex $v$ is denoted by $N(v)$. Let $n = |V|$, $m = |E|$, and $f$ be the number of faces of the plane graph $G$. They satisfy Euler's formula $n-m+f=2$. A $k$-\emph{face} is a face of $G$ which has size~$k$.
For $j=0,1,2$, define ${\cal P}_j={\cal P}_j(G)$ to be the set of
faces of $G$ with size congruent to $j$ modulo~3.
%We define $$I_j := \{ i \ge 3 : i \equiv j \bmod 3 \},$$ and let ${\cal P}_j$ %be the family of $k$-faces with $k \in I_j$.

\begin{theo}[{Grinberg's Theorem \cite[Theorem 7.3.5]{W}}]
\label{thm:grinberg}
Given a plane graph with a Hamiltonian cycle\/ $C$ and\/ $f_i$ ($f'_i$)\/ $i$-faces inside (outside) of\/ $C$, we have $$\sum_{i \ge 3} (i-2)(f_i - f'_i) = 0.$$
\end{theo}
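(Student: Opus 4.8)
The plan is to exploit the fact that the Hamiltonian cycle $C$, being a simple closed curve passing through every vertex, partitions the plane into an interior and an exterior region, and then to count edge--face incidences separately on each side. First I would classify the edges. Since $C$ visits all $n$ vertices, its boundary consists of exactly $n$ edges, and in the given plane embedding every remaining edge is a chord lying wholly inside or wholly outside $C$, because no edge can cross the curve $C$. Write $d$ for the number of interior chords and $d'$ for the number of exterior chords.

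Next I would count the interior faces. The cycle $C$ together with its $d$ interior chords is itself a plane graph on $n$ vertices with $n+d$ edges; applying Euler's formula $n-m+f=2$ to it and discarding the single unbounded face (the exterior of $C$) shows that the number of faces lying inside $C$ is
$$\sum_i f_i = d + 1,$$
and symmetrically $\sum_i f'_i = d' + 1$.

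I would then double-count, for the interior, the incident pairs consisting of a face and one of its bounding edges. Summing $i$ over all interior $i$-faces counts each of the $n$ cycle edges once, since a cycle edge borders exactly one interior face, and each of the $d$ interior chords twice, since such a chord borders two distinct interior faces. This yields
$$\sum_i i\, f_i = n + 2d,$$
and the same argument outside gives $\sum_i i\, f'_i = n + 2d'$. Combining the two interior counts, $\sum_i (i-2) f_i = (n+2d) - 2(d+1) = n-2$, an expression from which the chord count $d$ has cancelled; the identical computation exterior to $C$ gives $\sum_i (i-2) f'_i = n-2$. Subtracting these two equal quantities yields the claimed identity.

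The one point demanding care is the edge--face incidence count that produces the factor of two: I must confirm that each chord genuinely separates two distinct faces on its own side, and this is exactly where the looplessness hypothesis enters, since a loop would border a single face on both sides and thereby corrupt the doubling. Once that bookkeeping is secured, the statement follows as a direct double application of Euler's formula, and indeed neither Hamiltonicity beyond the spanning property of $C$ nor any further structure of $G$ is needed.
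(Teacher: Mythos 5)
Your proof is correct, and it is the standard argument. Note that the paper itself offers no proof of this statement at all: it is quoted as Grinberg's Theorem with a citation to West's textbook, so there is no internal proof to compare against; your write-up is essentially the classical textbook derivation. The two key counts are right: with $d$ chords inside $C$, Euler's formula applied to $C$ together with its interior chords gives $\sum_i f_i = d+1$ interior faces, and the incidence count gives $\sum_i i f_i = n + 2d$, so $\sum_i (i-2)f_i = n-2$ independently of $d$; symmetry outside and subtraction finish it.

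One small quibble: your stated reason for needing looplessness is off. A loop, being a closed curve, always has distinct faces on its two sides (by the Jordan curve theorem); the configuration in which an edge borders the \emph{same} face on both sides is a bridge, not a loop, and bridges cannot occur here since every edge lies on a cycle (the chords together with arcs of $C$, and the edges of $C$ itself). Moreover, for the incidence count it is immaterial whether the two faces flanking a chord are distinct: each interior edge contributes exactly $2$ to the total boundary-walk length of interior faces in any case. So your bookkeeping goes through even more robustly than you claim, and the looplessness hypothesis is not actually load-bearing at the spot you flagged; it merely keeps face sizes and boundary walks in the well-behaved $2$-connected setting. This does not affect the validity of the proof.
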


We call a graph \emph{Grinbergian} if it is 3-connected, planar and of one of the following two types.

\begin{description}
\item[Type 1] Every face but one belongs to ${\cal P}_2$.
\item[Type 2] Every face has even order, and the graph has odd order.
\end{description}

The motivation behind such a definition is that Grinbergian graphs can easily be proven to be non-Hamiltonian using Grinberg's Theorem. Namely, their face sizes are such that the sum in Grinberg's Theorem cannot possibly be zero. Thus, they are good candidates for hypohamiltonian graphs. (For Type 2 graphs, divide all faces into quadrilaterals. Now the number of edges is even. By Grinberg's Theorem, there must be an even number of quadrilaterals, so the number of faces is even. Since the order is odd, Euler's formula yields a contradiction.)

Our definition of Grinbergian graphs contains two types. One could ask, if there are other types of graphs that can be guaranteed to be non-Hamiltonian with Grinberg's Theorem based on only their sequence of face sizes. The following theorem shows that our definition is complete in this sense.

\begin{theo}
\label{thm:grinbergian_motivation}
Consider a $3$-connected simple planar graph with\/ $n$ vertices ($n \leq 42$) and\/ $F_i$ \mbox{$i$-faces} for each~$i$. Then there are non-negative integers $f_i,f_i'$ ($f_i+f_i'=F_i$) satisfying the equation\/ $\sum_{i \ge 3} (i-2)(f_i - f'_i) = 0$ if and only if the graph is not Grinbergian.
\end{theo}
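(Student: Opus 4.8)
The plan is to reformulate the stated equivalence as a single subset-sum condition and then treat the two directions separately. Writing $T := n-2$, Euler's formula together with $\sum_i i F_i = 2m$ and $\sum_i F_i = f$ gives $\sum_i (i-2)F_i = 2(m-f) = 2(n-2) = 2T$. Substituting $f_i' = F_i - f_i$,
\[
\sum_i (i-2)(f_i - f_i') = 2\sum_i (i-2)f_i - 2T,
\]
so the Grinberg equation has a solution with $0 \le f_i \le F_i$ if and only if one can choose a sub-multiset of the faces whose sizes satisfy $\sum (i-2) = T$; that is, if and only if $T$ is a subset sum of the face weights $i-2$. Since every face of a $3$-connected graph is bounded by a cycle, $3 \le i \le n$, so every weight lies in $\{1,\dots,T\}$.

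For the direction \emph{Grinbergian $\Rightarrow$ no solution} I would argue by residues. If $G$ is of Type 2 then every $i$ is even, hence every weight is even and every subset sum is even; but $n$ is odd, so $T$ is odd and cannot be hit. If $G$ is of Type 1, exactly one face has weight $e \not\equiv 0 \pmod 3$ while all others have weight $\equiv 0 \pmod 3$, so every subset sum is $\equiv 0$ or $\equiv e \pmod 3$; since $2T \equiv e$ forces $T \equiv 2e \pmod 3$, a residue distinct from both $0$ and $e$, the target is again missed. In either case $T$ is not a subset sum.

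The substantive direction is \emph{not Grinbergian $\Rightarrow$ $T$ is a subset sum}. Here I would exploit the curvature identity $\sum_i (6-i)F_i = 6f - 2m \ge 12$, which follows from Euler's formula and $\delta(G) \ge 3$ and which guarantees $3F_3 + 2F_4 + F_5 \ge 12$, an abundance of small faces. The argument then splits on the small faces present. In the generic case at least two faces have weight $\not\equiv 0 \pmod 3$; a short check shows that any two such weights already realise all residues modulo $3$, so the residue of the partial sum can be set freely, while the failure of Type 2 (an odd face, or $n$ even) fixes the parity. One then adds triangles, quadrilaterals and pentagons greedily to reach the exact magnitude $T \le 40$, the supply guaranteed above being ample.

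The step I expect to be hardest is the remaining regime in which small faces are scarce and of a single residue class, concretely when no triangle or quadrilateral occurs and every face size is $\equiv 2 \pmod 3$, so all weights are divisible by $3$. There the forced inequality yields $F_5 \ge 12$, and I would divide all weights and the target by $3$, reducing to a subset-sum problem with at least twelve unit coins and reduced target $(n-2)/3 < 14$; a greedy placement of the larger reduced coins, with the residual gap filled by unit coins, then succeeds, using $n \le 42$ to certify that the gap never exceeds the available supply. Confirming that this reduction covers every non-Grinbergian face sequence realizable by a $3$-connected planar graph of order at most $42$, and that the mod-$2$ and mod-$3$ reachability conditions can always be met simultaneously, is the delicate part, and is precisely where the hypothesis $n \le 42$ is used to keep the case analysis finite.
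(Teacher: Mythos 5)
Your opening reduction is exactly the paper's: substituting $f_i' = F_i - f_i$ turns Grinberg's equation into a subset-sum problem with target $T = n-2$ (the paper states it as a knapsack with item weights $2(i-2)$ and target $\sum_i (i-2)F_i = 2T$, which is the same thing divided by two). Your residue argument for the direction ``Grinbergian $\Rightarrow$ no solution'' is correct, and is in fact a cleaner, explicit write-up of something the paper only asserts in passing.

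The genuine gap is the converse direction, and it is precisely the part the paper does \emph{not} prove by hand: after recording the realizability constraints (every face size lies in $[3,n]$, and the average face size is at most $6-12/f$ because the dual is simple), the authors finish with an exhaustive computer search over all admissible face-size sequences --- that is the only place the hypothesis $n \le 42$ is really used, namely to make that search finite. Your sketch replaces this computation with greedy claims that are not substantiated. First, ``the supply guaranteed above being ample'' does not follow from $3F_3+2F_4+F_5 \ge 12$: four triangles satisfy that inequality while providing a total small-face weight of only $4$, and the correct form of the identity, $3F_3+2F_4+F_5 \ge 12 + \sum_{i\ge 7}(i-6)F_i$, is needed even to rule out easy failures (e.g.\ a face multiset with two $40$-faces and total weight $2T=80$ passes your stated inequality-free arithmetic checks but admits no solution; only the full inequality excludes it). Second, the interaction of parity, the residue mod $3$, and the exact magnitude $T$ is the entire difficulty: a subset-sum instance can have the right total, the right residues reachable, all weights in $[1,T]$, and still miss $T$, so any hand proof must quantitatively exploit planar density (for instance $f \ge (n+4)/2$, hence average face weight below $4$, and the fact that each face of size $i\ge 7$ must be ``paid for'' by small faces). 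Third, the case you single out as hardest --- all faces $\equiv 2 \pmod 3$, which after dividing by $3$ gives at least $12$ unit coins and a target at most $13$ --- is actually the tractable one; the weak point is your ``generic case,'' where the argument consists of the single word ``greedily.'' Note that the authors themselves remark the theorem ``most likely holds for all $n$'' yet claim it only for $n\le 42$: a strong hint that no clean non-computational proof was available to them. As written, your proposal is a plausible strategy plus a correct easy direction, not a proof; it would need either a completed (and likely lengthy) case analysis or the paper's computational step to close.
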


\begin{proof}
Since the graph is simple and 3-connected, every face must have at least $3$ edges. Applying \cite[Theorem 6.1.23]{W} to the dual of the graph gives $\sum_{i \ge 3} i F_i = 2e \leq 6f-12$, where $f$ is the number of faces. Thus, the average face size is at most $6-(12/f)$. In addition, the size of a face has to be smaller than or equal to the number of vertices in the graph.

Given a sequence of face sizes $F_i$, the problem of finding coefficients $f_i,f_i'$ that satisfy the equation can be reduced to a simple knapsack problem. Namely, note that $$\sum_{i \ge 3} (i-2)(f_i - f'_i) = \sum_{i \ge 3} (i-2)(F_i - 2f'_i) = \sum_{i \ge 3} (i-2)F_i - \sum_{i \ge 3} 2(i-2)f'_i,$$ so solving the equation corresponds to solving an instance of the knapsack problem where we have $F_i$ objects of weight $2(i-2)$, and we must find a subset whose total weight is $\sum_{i \ge 3} (i-2)F_i$. The result can be then verified with an exhaustive computer search over all sequences of face sizes that fulfill the above restrictions.
\end{proof}

Although we have only proved Theorem \ref{thm:grinbergian_motivation}
for $n\le 42$, as that is all we need, we consider it likely to hold
for all~$n$.

%For instance is the Faulkner-Younger graph on 42 vertices
%a Grinbergian graph of Type~1, and the Herschel graph one of Type~3.
By Grinberg's Theorem, Grinbergian graphs are non-Hamiltonian. Notice the difference between our definition and that of Zaks \cite{Z}, who defines \emph{non-Grinbergian} graphs to be graphs with every face in ${\cal P}_2$. We call the faces of a Grinbergian graph not belonging to ${\cal P}_2$ \emph{exceptional}.

\begin{theo}
\label{theo:grinbergian_hypohamiltonian_type}
Every Grinbergian hypohamiltonian graph is of Type 1, its exceptional face belongs to\/ ${\cal P}_1$, and its order is a multiple of\/ $3$.
\end{theo}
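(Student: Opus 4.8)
The plan is to prove the three assertions in turn: that $G$ must be of Type~1, that its exceptional face lies in ${\cal P}_1$, and that $3\mid n$. The workhorse throughout will be the following strengthening of Grinberg's Theorem, obtained by counting edges and chords: for any Hamiltonian plane graph on $n'$ vertices, the faces lying inside the Hamiltonian cycle satisfy $\sum_i (i-2)f_i = n'-2$ (and symmetrically for the outside). Reduced modulo $3$ this becomes powerful, because $i-2\equiv 0\pmod 3$ precisely for $i\in I_2$, so only faces outside ${\cal P}_2$ contribute.

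To rule out Type~2, I would use bipartiteness. As $G$ is $3$-connected it is $2$-connected, and a $2$-connected plane graph all of whose faces have even length is bipartite. A Type~2 graph thus has a bipartition $(A,B)$ with $|A|+|B|=n$ odd, so $|A|\neq|B|$; taking $|B|<|A|$ and deleting any $v\in B$ leaves a bipartite graph with classes of unequal sizes $|A|$ and $|B|-1$, hence with no Hamiltonian cycle. This contradicts hypohamiltonicity, so $G$ is of Type~1. Its unique exceptional face then has size $e$ with $e\notin I_2$, i.e. $e\equiv 0$ or $e\equiv 1\pmod 3$.

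Next I would determine $e$ and $n$. Summing $\sum_i(i-2)F_i = 2(m-f) = 2(n-2)$ and reducing modulo $3$ (only the exceptional face contributes) gives $e-2\equiv 2(n-2)$, that is $e+n\equiv 1\pmod 3$. Combined with $e\not\equiv 2$, the only options are $(e,n)\equiv(1,0)$ and $(e,n)\equiv(0,1)\pmod 3$; note these make the conclusions ``$e\in{\cal P}_1$'' and ``$3\mid n$'' equivalent, so it suffices to exclude $e\equiv 0$. For that, delete a vertex $v$ on the exceptional face. Since faces of the $3$-connected graph $G$ are simple cycles, exactly one face at $v$ is exceptional and the other $\deg(v)-1$ lie in ${\cal P}_2$; deleting $v$ merges them into one face of size (sum of incident face sizes)$-2\deg(v)\equiv e-2\pmod 3$, which is then the only face of $G-v$ possibly outside ${\cal P}_2$. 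If $e\equiv 0$ then $n\equiv 1$ and this merged face has size $\equiv 1$, contributing $i-2\equiv 2\pmod 3$; but the inside-sum identity applied to a Hamiltonian cycle of $G-v$ forces the inside contribution to be $\equiv (n-1)-2\equiv n\equiv 1\pmod 3$, whereas the single non-${\cal P}_2$ face can contribute only $0$ or $2$. This contradiction gives $e\equiv 1$, and hence $3\mid n$.

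The step I expect to need the most care is the residue computation for the merged face when $v$ lies on the exceptional face: the entire contradiction rests on this residue being $e-2$ rather than $e$, so I would check carefully that exactly one incident face is exceptional (using $3$-connectivity, so that $v$ appears once on each incident face) and that the $-2\deg(v)$ correction --- two for each deleted edge --- is exact. The supporting facts, namely the bipartite characterisation of even-faced plane graphs and the inside-sum identity $\sum_i(i-2)f_i=n'-2$, are standard and routine to verify.
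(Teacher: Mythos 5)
Your proof is correct, and its skeleton coincides with the paper's: Type 2 is eliminated by the same bipartite parity argument; the exceptional face is handled by deleting a vertex $v$ on it and observing that the merged face of $G-v$ has size $\equiv e-2\pmod 3$ while all other faces stay in ${\cal P}_2$ (the paper performs exactly this computation); and the order congruence comes from Euler's formula combined with the face-size sum $\sum_i i F_i = 2m$ (the paper's count $2m\equiv 2(f-1)+1\pmod 3$ is the same calculation). You differ in two ways. Organizationally, you derive $e+n\equiv 1\pmod 3$ first, which makes the conclusions ``$e\in I_1$'' and ``$3\mid n$'' equivalent and leaves only the case $(e,n)\equiv(0,1)$ to exclude; the paper instead fixes $e\equiv 1$ directly by the deletion argument and then computes $n\equiv 0$ separately --- the two orders are logically interchangeable. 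More substantively, to exclude the last case you invoke a one-sided strengthening of Grinberg's identity, $\sum_{\text{inside}}(i-2)f_i=n'-2$; this identity is true (your chord-counting justification is the standard one), but it is more than you need. The difference form already stated in the paper suffices at exactly this point: if $e\equiv 0$, the merged face of $G-v$ has size $\equiv 1\pmod 3$ and every other face lies in ${\cal P}_2$, so the Grinberg sum for any Hamiltonian cycle of $G-v$ would be $\equiv\pm2\not\equiv 0\pmod 3$, contradicting the hamiltonicity of $G-v$; no bookkeeping of which side each face lies on is required, and your inside/outside case distinction disappears. Finally, the delicate point you flagged --- that the merged face has size $\sum_i\ell_i-2\deg(v)$ with exactly one incident face exceptional --- does hold for the reason you give: since $G$ is 3-connected, $v$ lies on exactly $\deg(v)$ faces, each bounded by a cycle passing through $v$ once.
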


\begin{proof}
Let $G$ be a Grinbergian hypohamiltonian graph. There are two possible cases, one for each type of Grinbergian graphs.

{\bf Type 1:}
Let the $j$-face $F$ be the exceptional face of $G$ (so $F \notin {\cal P}_2$), and let $v$ be a vertex of $F$. Vertex $v$ belongs to $F$ and to several, say $h$, faces in ${\cal P}_2$. The face of $G - v$ containing $v$ in its interior has size $3h + j - 2\pmod 3$, while all other faces have size $2\pmod 3$. Since $G$ is hypohamiltonian, $G - v$ must be Hamiltonian. Thus, $G - v$ cannot be a Grinbergian graph, so $3h + j - 2 \equiv 2\pmod 3$, thus $F \in {\cal P}_1$.

{\bf Type 2:}  As $G$ contains only cycles of even length, it is bipartite. A bipartite graph can only be Hamiltonian if both of the parts have equally many vertices. Thus, it is not possible that $G-v$ is Hamiltonian for every vertex $v$, so $G$ cannot be hypohamiltonian and we have a contradiction.

Hence, $G$ is of Type~1, and its exceptional face is in ${\cal P}_1$. Counting the edges we get $2m \equiv 2(f-1)+1 \pmod 3$, which together with Euler's formula gives $$2n = 2m-2f+4 \equiv 2f-1-2f+4 \equiv 0 \!\!\!\pmod 3,$$ so $n$ is a multiple of 3.
\end{proof}

\begin{lem}\label{lem:grinberg_type_1_degree_4}
In a Grinbergian hypohamiltonian graph\/ $G$ of Type 1, all vertices of the exceptional face have degree at least\/ $4$.
\end{lem}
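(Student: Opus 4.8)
The plan is to argue by contradiction: suppose some vertex $v$ on the exceptional face $F$ has degree exactly $3$, with neighbours $a,b,c$ in rotational order around $v$. Since $G$ is $3$-connected, every face boundary is a simple cycle, so $F$ meets $v$ in a single corner; exactly two of the three edges at $v$, say $va$ and $vc$, lie on $\partial F$, while the third edge $vb$ does not. The two faces other than $F$ incident with $v$, call them $P$ (in the corner between $va$ and $vb$) and $Q$ (between $vb$ and $vc$), both lie in ${\cal P}_2$, since by Theorem~\ref{theo:grinbergian_hypohamiltonian_type} the graph is of Type~1 with $F$ its unique exceptional face.

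The key move is to delete the one special neighbour $b$, rather than $v$ itself. In $G-b$ the vertex $v$ has degree $2$, its only remaining neighbours being $a$ and $c$; hence any Hamiltonian cycle $C$ of $G-b$ (one exists because $G$ is hypohamiltonian) is forced to use both edges $va$ and $vc$. On the face side, deleting $b$ merges all faces incident with $b$ into a single face $M$ of length $\equiv 0\pmod 3$: assuming $b\notin\partial F$, every face around $b$ lies in ${\cal P}_2$, so the sum of their lengths is $\equiv 2\deg(b)\pmod 3$, and the merge subtracts $2\deg(b)$. Crucially $F$ survives unchanged, so in $G-b$ there is exactly one face of length $\equiv 1$ (namely $F$) and exactly one of length $\equiv 0$ (namely $M$), all remaining faces lying in ${\cal P}_2$.

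Now I would play this forced edge against Grinberg's theorem. In $G$ the edge $va$ separates $F$ from $P$, and since $P$ is incident with $b$ it is swallowed into $M$ while $F$ is untouched; thus in $G-b$ the edge $va$ separates $F$ from $M$. As $va\in C$, the faces $F$ and $M$ must lie on opposite sides of $C$. On the other hand, applying Theorem~\ref{thm:grinberg} to $G-b$ and reducing $\sum_i (i-2)(f_i-f_i')=0$ modulo $3$, only $F$ and $M$ contribute, every ${\cal P}_2$-face having $i-2\equiv 0$; writing $\varepsilon_F,\varepsilon_M\in\{+1,-1\}$ for inside/outside, the surviving congruence $2\varepsilon_F+\varepsilon_M\equiv 0\pmod 3$ forces $\varepsilon_F=\varepsilon_M$, that is, $F$ and $M$ on the \emph{same} side of $C$. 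These two conclusions contradict each other, so $v$ cannot have degree $3$.

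The step that needs the most care is the tacit hypothesis $b\notin\partial F$, which is what keeps $F$ intact and makes $M$ have length $\equiv 0\pmod 3$; I expect this to be the main obstacle. It is resolved by $3$-connectivity: if $b$ lay on $\partial F$, then both faces $F$ and $P$ bordering the edge $va$ would be absorbed into $M$ upon deleting $b$, so $va$ would have the single face $M$ on both sides and hence be a bridge of $G-b$. This is impossible, because $G$ being $3$-connected makes $G-b$ $2$-connected. Thus the case $b\in\partial F$ never arises, and the argument above goes through for every vertex of $\partial F$, giving the claimed lower bound on their degrees.
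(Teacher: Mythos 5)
Your proof is correct and follows essentially the same route as the paper's: delete the neighbour $b$ of $v$ lying off the exceptional face, observe that the faces around $b$ (all in ${\cal P}_2$) merge into a single face of length $\equiv 0 \pmod 3$, use the forced degree-2 passage through $v$ to place the ${\cal P}_1$-face and the ${\cal P}_0$-face on opposite sides of any Hamiltonian cycle of $G-b$, and contradict Grinberg's Theorem modulo 3. The only difference is in one justification: where the paper simply asserts $N(v)\setminus V(F)\neq\emptyset$ from 3-connectivity (face boundaries of 3-connected plane graphs are induced cycles), you establish $b\notin\partial F$ by a self-contained bridge argument in $G-b$, which is a perfectly valid substitute.
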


\begin{proof}
%Denote the exceptional face by $Q$. Now assume that there is a vertex $v \in V(Q)$ with degree 3, and consider the vertex $w \in N(v) \setminus V(Q)$. (Note that $N(v) \setminus V(Q) \ne \emptyset$, because $G$ is $3$-connected.) Let $k$ be the degree of~$w$.
%Now consider the graph $G'$ obtained by deleting $w$ from $G$. Denote the number of vertices in the faces of $G$ that contain $w$ by $N_i$ ($1\leq i \leq k$); we have $N_i \equiv 2 \pmod 3$. The number of vertices in the face of $G'$ containing $w$ in its interior is now $m = \sum_i (N_i-2) \equiv 0 \pmod 3$.
%Assume that $G'$ is Hamiltonian. The graph $G'$ contains only faces in ${\cal P}_2$ except for one face in ${\cal P}_1$ and one in ${\cal P}_0$. The face in ${\cal P}_1$ and the face in ${\cal P}_0$ are on different sides of any Hamiltonian cycle in $G'$, since the cycle must pass through~$v$.
%The sum in Grinberg's Theorem, modulo 3, is then $(m - 2) + 1 \equiv 2 \pmod 3$ or $-(m - 2) - 1 \equiv 1 \pmod 3$, so $G'$ is non-Hamiltonian and we have a contradiction.
%
Denote the exceptional face by $Q$. Now assume that there is a vertex $v \in V(Q)$ with degree 3, and consider the vertex $w \in N(v) \setminus V(Q)$. (Note that $N(v) \setminus V(Q) \ne \emptyset$, because $G$ is $3$-connected.) Let $k$ be the degree of~$w$, and denote by $N_1,\ldots,N_k$ the sizes of the faces of $G$ that contain $w$. We have $N_i \equiv 2 \pmod 3$ for all~$i$.
Now consider the graph $G'$ obtained by deleting $w$ from $G$.
The size of the face of $G'$ which in $G$ contained $w$ in its interior is $m = \sum_i (N_i-2) \equiv 0 \pmod 3$. Assume that $G'$ is Hamiltonian. The graph $G'$ contains only faces in ${\cal P}_2$ except for one face in ${\cal P}_1$ and one in ${\cal P}_0$. The face in ${\cal P}_1$ and the face in ${\cal P}_0$ are on different sides of any Hamiltonian cycle in $G'$, since the cycle must pass through~$v$. The sum in Grinberg's Theorem, modulo 3, is then $(m - 2) + 1 \equiv 2 \pmod 3$ or $-(m - 2) - 1 \equiv 1 \pmod 3$, so $G'$ is non-Hamiltonian and we have a contradiction.
\end{proof}

In Section \ref{sec:generation}, we will use these properties to show that the smallest Grinbergian hypohamiltonian graph has 42 vertices.

\section{Generation of 4-face deflatable hypohamiltonian graphs}\label{sec:generation}
\noindent We define the operation \emph{4-face deflater} denoted by $\mathcal{FD}_4$ which squeezes a 4-face of a plane graph into a path of length 2 (see Figure~\ref{fig:operation}). The inverse of this operation is called \emph{2-path inflater} which expands a path of length 2 into a 4-face and is denoted by $\mathcal{PI}_2$. In Figure~\ref{fig:operation} each half line connected to a vertex means that there is an edge incident to the vertex at that position and a small triangle allows zero or more incident edges at that position. For example $v_3$ has degree at least 3 and 4 in Figures~\ref{fig:operation_result} and \ref{fig:operation_original}, respectively. The set of all graphs obtained by applying $\mathcal{PI}_2$ and $\mathcal{FD}_4$ on a graph $G$ is denoted by $\mathcal{PI}_2(G)$ and $\mathcal{FD}_4(G)$, respectively.

\begin{figure}[htbp]
\begin{center}
\begin{minipage}[b]{0.3\linewidth}
\centering
\subfloat[]{\label{fig:operation_result}
	\begin{picture}(0,0)%
	\includegraphics{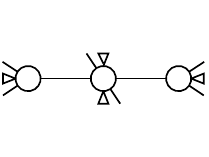}%
	\end{picture}%
	\setlength{\unitlength}{3522sp}%
	\begin{picture}(1852,1394)(-1825,-1883)
	\put(-972,-1225){\makebox(0,0)[lb]{\smash{{\SetFigFont{8}{9.6}{\rmdefault}{\mddefault}{\updefault}{\color[rgb]{0,0,0}$v_5$}%
	}}}}
	\put(-1647,-1225){\makebox(0,0)[lb]{\smash{{\SetFigFont{8}{9.6}{\rmdefault}{\mddefault}{\updefault}{\color[rgb]{0,0,0}$v_3$}%
	}}}}
	\put(-297,-1225){\makebox(0,0)[lb]{\smash{{\SetFigFont{8}{9.6}{\rmdefault}{\mddefault}{\updefault}{\color[rgb]{0,0,0}$v_1$}%
	}}}}
	\end{picture}
}
\end{minipage}
\begin{minipage}[b]{0.3\linewidth}
\centering
	\begin{picture}(0,0)%
	\includegraphics{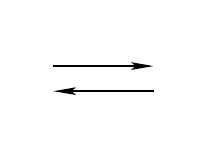}%
	\end{picture}%
	\setlength{\unitlength}{3522sp}%
	\begin{picture}(1852,1394)(-1825,-1883)
	\put(-1124,-849){\makebox(0,0)[lb]{\smash{{\SetFigFont{9}{10.8}{\rmdefault}{\mddefault}{\updefault}{\color[rgb]{0,0,0}$\mathcal{PI}_2$}%
	}}}}
	\put(-1124,-1636){\makebox(0,0)[lb]{\smash{{\SetFigFont{9}{10.8}{\rmdefault}{\mddefault}{\updefault}{\color[rgb]{0,0,0}$\mathcal{FD}_4$}%
	}}}}
	\end{picture}%

\end{minipage}
\begin{minipage}[b]{0.3\linewidth}
\centering
\end{minipage}
\subfloat[]{\label{fig:operation_original}
	\begin{picture}(0,0)%
	\includegraphics{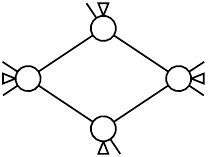}%
	\end{picture}%
	\setlength{\unitlength}{3522sp}%
	\begin{picture}(1852,1394)(-1825,-1883)
	\put(-972,-775){\makebox(0,0)[lb]{\smash{{\SetFigFont{8}{9.6}{\rmdefault}{\mddefault}{\updefault}{\color[rgb]{0,0,0}$v_4$}%
	}}}}
	\put(-972,-1675){\makebox(0,0)[lb]{\smash{{\SetFigFont{8}{9.6}{\rmdefault}{\mddefault}{\updefault}{\color[rgb]{0,0,0}$v_2$}%
	}}}}
	\put(-1647,-1225){\makebox(0,0)[lb]{\smash{{\SetFigFont{8}{9.6}{\rmdefault}{\mddefault}{\updefault}{\color[rgb]{0,0,0}$v_3$}%
	}}}}
	\put(-297,-1225){\makebox(0,0)[lb]{\smash{{\SetFigFont{8}{9.6}{\rmdefault}{\mddefault}{\updefault}{\color[rgb]{0,0,0}$v_1$}%
	}}}}
	\end{picture}%
}
\end{center}
\caption{Operations $\mathcal{FD}_4$ and $\mathcal{PI}_2$}
\label{fig:operation}
\end{figure}

Let $\mathcal{D}_5(f)$ be the set of all simple connected plane graphs with $f$ faces and minimum degree at least 5.
This class of graphs can be generated using the program \emph{plantri} \cite{BM07}. Let us denote the dual of a plane graph $G$ by $G^*$. We define the family of \emph{4-face deflatable graphs} (not necessarily simple) with $f$ 4-faces and $n$ vertices, denoted by $\mathcal{M}^4_f(n)$, recursively as:
\begin{equation}
\mathcal{M}^4_f(n)=\left\{
\begin{array}{lr}
\left\{G^* : G\in\mathcal{D}_5\left(n\right)\right\},&f=0;\\[1ex]
\bigcup_{G\in\mathcal{M}^4_{f-1}(n-1)}\mathcal{PI}_2(G),&f>0.
\end{array}
\right.
\end{equation}
It should be noted that applying $\mathcal{PI}_2$ to a graph increases the number of both vertices and 4-faces by one. Then, we can filter $\mathcal{M}^4_f$ for possible hypohamiltonian graphs and we define $\mathcal{H}^4_f$ based on it as:
\begin{equation}
\mathcal{H}^4_f(n)=\{G\in\mathcal{M}^4_f(n) : \text{$G$ is hypohamiltonian}\}.
\end{equation}

The function $\mathcal{H}^4_f(n)$ can be defined for $n\geq 20$ because the minimum face count for a simple planar 5-regular graph is 20 (icosahedron). Also it is straightforward to check that $f\leq n-20$ because $\mathcal{H}^4_f(n)$ is defined based on $\mathcal{H}^4_{f-1}(n-1)$ for $f>0$.

To test hamiltonicity of graphs, we use depth-first search with the following pruning rule: If there is a vertex that does not belong to the current partial cycle, and has fewer than two neighbours that either do not belong to the current partial cycle or are an endpoint of the partial cycle, the search can be pruned. This approach can be implemented efficiently with careful bookkeeping of the number of neighbours that do not belong to the current partial cycle for each vertex. It turns out to be reasonably fast for small planar graphs.

Finally, we define the set of \emph{4-face deflatable hypohamiltonian graphs} denoted by $\mathcal{H}^4(n)$ as:
\begin{equation}
\mathcal{H}^4(n)=\bigcup_{f=0}^{n-20} \mathcal{H}^4_f(n).
\end{equation}

Using this definition for $\mathcal{H}^4_f(n)$, we are able to find many hypohamiltonian graphs which were not discovered so far. The graphs found on 105 vertices by Thomassen \cite{T4}, 57 by Hatzel \cite{H}, 48 by C.~Zamfirescu and T.~Zamfirescu \cite{ZZ}, and 42 by Araya and Wiener \cite{WA} are all 4-face deflatable and belong to $\mathcal{H}^4_0(105)$, $\mathcal{H}^4_1(57)$, $\mathcal{H}^4_1(48)$, and $\mathcal{H}^4_1(42)$, respectively.

We have generated $\mathcal{H}^4_f(n)$ exhaustively for $20\leq n\leq 39$ and all possible $f$ but no graph was found, which means that for all $n<40$ we have $\mathcal{H}^4_f(n)=\emptyset$. For $n>39$ we were not able to finish the computation for all $f$ due to the amount of required time. For $n=40,41,42,43$ we finished the computation up to $f=12,12,11,10$, respectively. The only values of $n$ and $f$ for which $\mathcal{H}^4_f(n)$ was non-empty were $\mathcal{H}^4_5(40)$, $\mathcal{H}^4_1(42)$, $\mathcal{H}^4_7(42)$, $\mathcal{H}^4_4(43)$, and $\mathcal{H}^4_5(43)$. More details about these families are provided in Tables~\ref{tbl:H_40}, \ref{tbl:H_42} and \ref{tbl:H_43}. Based on the computations we can obtain the Theorems~\ref{theo:no_hypo_less_than_40}, \ref{theo:25_hypo_for_40}, \ref{theo:179_hypo_for_42}, and \ref{theo:497_hypo_for_43}. The complete list of graphs generated is available to download at \cite{J1}.

\begin{table}[htbp]
\centering
\begin{tabular}{|c|l|l|r|}
\hline
4-Face Count&Face Sequence&Degree Sequence&Count\\
\hline
\multirow{5}{*}{5}&\multirow{5}{*}{$5\times 4, 22\times 5$}&$30\times 3, 10\times 4$&4\\
\cline{3-4}
&&$31\times 3, 8\times 4, 1\times 5$&10\\
\cline{3-4}
&&$32\times 3, 6\times 4, 2\times 5$&9\\
\cline{3-4}
&&$33\times 3, 4\times 4, 3\times 5$&2\\
\cline{3-4}
&&All&25\\
\hline
\end{tabular}
\caption{Facts about $\mathcal{H}^4_5(40)$}
\label{tbl:H_40}
\end{table}

\begin{table}[htbp]
\centering
\begin{tabular}{|c|l|l|r|}
\hline
4-Face Count&Face Sequence&Degree Sequence&Count\\
\hline
\multirow{2}{*}{1}&\multirow{2}{*}{$1\times 4, 26\times 5$}&$34\times 3, 8\times 4$&5\\
\cline{3-4}
&&$35\times 3, 6\times 4, 1\times 5$&2\\
\hline
\multirow{9}{*}{7}&\multirow{9}{*}{$7\times 4, 22\times 5$}&$30\times 3, 12\times 4$&4\\
\cline{3-4}
&&$31\times 3, 10\times 4, 1\times 5$&28\\
\cline{3-4}
&&$32\times 3, 8\times 4, 2\times 5$&57\\
\cline{3-4}
&&$33\times 3, 6\times 4, 3\times 5$&49\\
\cline{3-4}
&&$33\times 3, 7\times 4, 1\times 5, \times 6$&11\\
\cline{3-4}
&&$34\times 3, 4\times 4, 4\times 5$&10\\
\cline{3-4}
&&$34\times 3, 5\times 4, 2\times 5, 1\times 6$&5\\
\cline{3-4}
&&$34\times 3, 6\times 4, 2\times 6$&6\\
\cline{3-4}
&&$35\times 3, 4\times 4, 1\times 5, 2\times 6$&2\\
\hline
All&All&All&179\\
\hline
\end{tabular}
\caption{Facts about $\mathcal{H}^4_1(42)$ and $\mathcal{H}^4_7(42)$}
\label{tbl:H_42}
\end{table}

\begin{table}[htbp]
\centering
\begin{tabular}{|c|l|l|r|}
\hline
4-Face Count&Face Sequence&Degree Sequence&Count\\
\hline
\multirow{2}{*}{4}&\multirow{2}{*}{$4\times 4, 23\times 5, 1\times 7$}&$36\times 3, 6\times 4, 1\times 6$&1\\
\cline{3-4}
&&$37\times 3, 4\times 4, 1\times 5, 1\times 6$&1\\
\hline
\multirow{15}{*}{5}&\multirow{5}{*}{$5\times 4, 22\times 5, 1\times 8$}&$34\times 3, 9\times 4$&8\\
\cline{3-4}
&&$35\times 3, 7\times 4, 1\times 5$&20\\
\cline{3-4}
&&$36\times 3, 5\times 4, 2\times 5$&19\\
\cline{3-4}
&&$37\times 3, 3\times 4, 3\times 5$&1\\
\cline{3-4}
&&$37\times 3, 4\times 4, 1\times 5, 1\times 6$&1\\
\cline{2-4}
&\multirow{10}{*}{$5\times 4, 24\times 5$}&$32\times 3, 11\times 4$&52\\
\cline{3-4}
&&$33\times 3, 9\times 4, 1\times 5$&148\\
\cline{3-4}
&&$34\times 3, 7\times 4, 2\times 5$&175\\
\cline{3-4}
&&$34\times 3, 8\times 4, 1\times 6$&2\\
\cline{3-4}
&&$35\times 3, 5\times 4, 3\times 5$&56\\
\cline{3-4}
&&$35\times 3, 6\times 4, 1\times 5, 1\times 6$&6\\
\cline{3-4}
&&$36\times 3, 3\times 4, 4\times 5$&1\\
\cline{3-4}
&&$36\times 3, 4\times 4, 2\times 5, 1\times 6$&4\\
\cline{3-4}
&&$37\times 3, 2\times 4, 3\times 5, 1\times 6$&1\\
\cline{3-4}
&&$37\times 3, 3\times 4, 1\times 5, 2\times 6$&1\\
\hline
All&All&All&497\\
\hline
\end{tabular}
\caption{Facts about $\mathcal{H}^4_4(43)$ and $\mathcal{H}^4_5(43)$}
\label{tbl:H_43}
\end{table}

\begin{theo}\label{theo:no_hypo_less_than_40}
There is no planar\/ $4$-face deflatable hypohamiltonian graph of order less than\/ $40$.
\end{theo}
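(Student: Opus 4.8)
The plan is to reduce this statement to a finite computation. Recall from the recursive definition that $\mathcal{H}^4_f(n)$ is built from $\mathcal{M}^4_f(n)$, which in turn descends from the duals of 5-connected planar graphs with minimum degree at least $5$. The constraint $f \leq n-20$ (derived in the text from the fact that the base case $\mathcal{D}_5$ requires at least $20$ faces, the icosahedron being the smallest $5$-regular planar simple graph) means that for each fixed $n$ there are only finitely many face counts $f$ to consider, and each $\mathcal{M}^4_f(n)$ is a finite set.

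First I would invoke the generation procedure: for every $n$ with $20 \leq n \leq 39$ and every admissible $f$ in the range $0 \leq f \leq n-20$, the sets $\mathcal{H}^4_f(n)$ are computed exhaustively. This relies on \emph{plantri} producing a complete list of $\mathcal{D}_5(n)$ for the base case $f=0$, and on the fact that $\mathcal{PI}_2$ enumerates, without omission, every graph in $\mathcal{M}^4_f(n)$ starting from $\mathcal{M}^4_{f-1}(n-1)$. Since $\mathcal{H}^4(n) = \bigcup_{f=0}^{n-20}\mathcal{H}^4_f(n)$ by definition, it suffices to verify that each $\mathcal{H}^4_f(n)$ in this finite range is empty.

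Next I would apply the hamiltonicity test described in the paper (depth-first search with the stated degree-based pruning rule) to each generated graph, discarding any that is Hamiltonian and checking the vertex-deletion condition on the rest. The claim is simply that no graph survives these filters for any $n < 40$; that is, $\mathcal{H}^4_f(n) = \emptyset$ throughout. Because $\mathcal{H}^4(n)$ is empty exactly when all of its finitely many constituent pieces are empty, the emptiness of every $\mathcal{H}^4_f(n)$ for $n < 40$ yields the theorem directly.

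The main obstacle is not conceptual but computational and verificational: one must trust that the generation of $\mathcal{M}^4_f(n)$ via iterated $\mathcal{PI}_2$ inflation is genuinely exhaustive and isomorph-free, so that no candidate graph is silently dropped, and that \emph{plantri} correctly enumerates the base family $\mathcal{D}_5(n)$. The range $n \leq 39$ combined with $f \leq n-20$ keeps the total number of graphs finite and within reach, so the result follows once the exhaustiveness of the construction and the correctness of the hamiltonicity filter are granted.
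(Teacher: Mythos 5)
Your proposal matches the paper's own proof in essence: the theorem is established exactly by the exhaustive computer generation of $\mathcal{H}^4_f(n)$ for $20 \leq n \leq 39$ and all $0 \leq f \leq n-20$ (using \emph{plantri} for the $f=0$ base case and iterated $\mathcal{PI}_2$ for $f>0$), followed by the hypohamiltonicity filter, which found no graphs. One minor slip worth correcting: the base family $\mathcal{D}_5$ consists of simple connected plane graphs with minimum degree at least $5$, not $5$-connected graphs, though this does not affect the argument.
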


\begin{theo}\label{theo:25_hypo_for_40}
There are at least\/ $25$ planar\/ $4$-face deflatable hypohamiltonian graphs on\/ $40$ vertices.
\end{theo}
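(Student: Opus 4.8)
The plan is to reduce the statement to a finite, organized computation that the generative definition of $\mathcal{M}^4_f(n)$ already sets up. Since a lower bound of $25$ only requires exhibiting $25$ pairwise non-isomorphic graphs with the four required properties, I would produce candidates by running the recursion directly. First I would generate $\mathcal{D}_5(35)$, the simple connected plane graphs with $35$ faces and minimum degree at least $5$, using \emph{plantri} \cite{BM07}, and dualize each to obtain $\mathcal{M}^4_0(35)$; these are plane graphs on $35$ vertices all of whose faces have size at least $5$. I would then apply the $2$-path inflater $\mathcal{PI}_2$ five times in succession, performing isomorphism rejection at each level so that every isomorphism class is represented exactly once, thereby building $\mathcal{M}^4_5(40)$. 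Every graph produced this way is planar (dualization and $\mathcal{PI}_2$ both preserve planarity), has exactly $40$ vertices and exactly five $4$-faces (each inflater raises both counts by one), and is $4$-face deflatable by construction, so three of the four properties hold automatically.

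Next I would filter for the remaining property. For each candidate $G$ I would test hypohamiltonicity by verifying that $G$ has no Hamiltonian cycle and that $G-v$ has one for every vertex $v$ — altogether $41$ Hamiltonicity queries per graph — using the depth-first search with the degree-based pruning rule described above. Retaining the graphs that pass gives $\mathcal{H}^4_5(40)$, which the search reports to contain $25$ members, distributed among the degree sequences recorded in Table~\ref{tbl:H_40}; their pairwise non-isomorphism is guaranteed by the isomorphism rejection carried out during generation (and can be confirmed independently by a canonical-form computation). This already yields $25$ planar, $4$-face deflatable, hypohamiltonian graphs on $40$ vertices, establishing the theorem. The statement is phrased as \emph{at least} $25$ rather than \emph{exactly} because the generation for $n=40$ was completed only up to $f=12$, whereas $f$ may range up to $n-20=20$; within the completed range $\mathcal{H}^4_5(40)$ is the only non-empty family.

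The main obstacle is that the argument is computational, so its force rests on the correctness of the two programs: reliable isomorphism rejection (so that no class is lost or double-counted) and a correct Hamiltonicity test. For the lower-bound statement, however, completeness of the search is not strictly needed — it suffices that the $25$ exhibited graphs are genuinely distinct and genuinely hypohamiltonian, and each of these is a finite, independently verifiable certificate, with the explicit graphs available at \cite{J1}. Thus the conceptual content is light and the real difficulty is practical: making the Hamiltonicity filtering efficient enough to sweep the large intermediate families $\mathcal{M}^4_f$, which is exactly what the pruning rule is designed to address.
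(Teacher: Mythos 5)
Your proposal is correct and follows essentially the same route as the paper: the paper also obtains this theorem directly from the computer generation of $\mathcal{M}^4_5(40)$ (built from duals of $\mathcal{D}_5(35)$ via five applications of $\mathcal{PI}_2$ with isomorphism rejection) followed by the hypohamiltonicity filter, yielding the $25$ graphs of $\mathcal{H}^4_5(40)$ recorded in Table~\ref{tbl:H_40}. Your observation that the lower bound needs only the $25$ explicit graphs as verifiable certificates, not completeness of the search, matches the paper's framing, including the reason the count is stated as ``at least'' $25$.
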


\begin{theo}\label{theo:179_hypo_for_42}
There are at least\/ $179$ planar\/ $4$-face deflatable hypohamiltonian graphs on\/ $42$ vertices.
\end{theo}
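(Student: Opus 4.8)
The plan is to establish this result, just as for Theorems~\ref{theo:no_hypo_less_than_40} and~\ref{theo:25_hypo_for_40}, by an exhaustive computer-aided generation of the families $\mathcal{M}^4_f(42)$ followed by a hypohamiltonicity filter. Concretely, I would realise the recursive definition of $\mathcal{M}^4_f(n)$ directly: for each relevant value of $f$, first generate the base family $\mathcal{D}_5(42-f)$ of simple connected plane graphs with $42-f$ faces and minimum degree at least $5$ using \emph{plantri}~\cite{BM07}, and take the duals to obtain $\mathcal{M}^4_0(42-f)$. Then apply the \mbox{2-path} inflater $\mathcal{PI}_2$ exactly $f$ times, building the chain $\mathcal{M}^4_0(42-f), \mathcal{M}^4_1(43-f), \dots, \mathcal{M}^4_f(42)$ layer by layer. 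Since the same graph in $\mathcal{M}^4_f(42)$ can arise from several parents, and from several \mbox{2-paths} within a single parent, each newly produced graph must be reduced to a canonical form (e.g.\ via canonical labelling) so that isomorphic copies are discarded and every isomorphism class is counted exactly once.

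Having generated $\mathcal{M}^4_f(42)$, I would test each member for hypohamiltonicity to extract $\mathcal{H}^4_f(42)$. This amounts to certifying that $G$ itself is non-Hamiltonian while $G-v$ is Hamiltonian for every vertex $v$, for which I would use the depth-first search with the degree-based pruning rule described above; this is efficient enough on planar graphs of this order. The graphs surviving the filter are exactly the planar \mbox{4-face} deflatable hypohamiltonian graphs on $42$ vertices lying in $\mathcal{M}^4_f(42)$, and they are pairwise non-isomorphic by construction within each $f$; across distinct $f$ they are automatically non-isomorphic since they have different numbers of \mbox{4-faces}, hence different face sequences.

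The computation must in principle be carried out for every $f$ with $0 \le f \le n-20 = 22$; in practice it was completed for $0 \le f \le 11$. Among these, $\mathcal{H}^4_f(42)$ turns out to be non-empty only for $f=1$ and $f=7$, contributing $7$ and $172$ graphs respectively, as recorded in Table~\ref{tbl:H_42}; summing yields $179$ pairwise non-isomorphic graphs, which gives the stated bound. The phrase \emph{at least} reflects precisely that the values $f \ge 12$ were not exhausted, so additional graphs in this family may still exist. The main obstacle is computational rather than conceptual: the number of intermediate graphs produced by iterating $\mathcal{PI}_2$ grows rapidly, so the generation must be organised to reject isomorphic duplicates early and to prune the Hamiltonicity tests aggressively, and genuine care is required to be confident both that the generation is truly exhaustive and that the isomorphism reduction neither merges distinct classes nor splits a single one.
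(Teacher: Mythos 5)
Your proposal matches the paper's approach essentially exactly: the paper establishes this theorem purely by the computer-aided generation described in Section~\ref{sec:generation}, namely building $\mathcal{M}^4_f(42)$ from duals of \emph{plantri}-generated minimum-degree-$5$ plane graphs via repeated $\mathcal{PI}_2$, filtering for hypohamiltonicity with the pruned depth-first search, and finding non-empty sets only for $f=1$ ($7$ graphs) and $f=7$ ($172$ graphs), totalling $179$ as in Table~\ref{tbl:H_42}. Your accounting of the isomorphism reduction, the incompleteness for $f\geq 12$ (hence ``at least''), and the non-isomorphism across distinct $f$ via the $4$-face count is all consistent with the paper.
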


\begin{theo}\label{theo:497_hypo_for_43}
There are at least\/ $497$ planar\/ $4$-face deflatable hypohamiltonian graphs on\/ $43$ vertices.
\end{theo}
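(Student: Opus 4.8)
The statement is computational: it asserts a lower bound on the number of pairwise non-isomorphic graphs in $\mathcal{H}^4(43) = \bigcup_f \mathcal{H}^4_f(43)$, obtained by exhaustively realising the recursive definition of the families $\mathcal{M}^4_f(43)$ and applying the hypohamiltonicity filter. The plan is therefore to turn the recursion into an explicit generation pipeline. Since each application of $\mathcal{PI}_2$ raises both the vertex count and the $4$-face count by one, the family $\mathcal{M}^4_f(43)$ is seeded by $\mathcal{M}^4_0(43-f) = \{G^* : G \in \mathcal{D}_5(43-f)\}$. The first step is thus to generate, for each relevant $m = 43 - f$, the set $\mathcal{D}_5(m)$ of simple connected plane graphs on $m$ faces with minimum degree at least $5$; this is precisely what \emph{plantri} \cite{BM07} produces in exhaustive, isomorph-free form. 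Dualising each such graph yields $\mathcal{M}^4_0(m)$, whose members have all faces of size at least $5$ and hence no $4$-faces.

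Next I would iterate the $2$-path inflater: starting from $\mathcal{M}^4_0(43-f)$, apply $\mathcal{PI}_2$ in all admissible ways, level by level, so that $\mathcal{M}^4_{f-1}(n-1)$ is carried to $\mathcal{M}^4_f(n)$ until $n = 43$. At every intermediate level the resulting plane graphs must be canonically labelled and deduplicated, because distinct parents, and distinct inflation sites on a single parent, routinely yield isomorphic children. Having assembled $\mathcal{M}^4_f(43)$ for $f = 0, 1, \dots, 10$, I would test each member for hypohamiltonicity using the depth-first search with the pruning rule described above, deciding both the non-Hamiltonicity of $G$ and the Hamiltonicity of every $G - v$. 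The survivors constitute $\mathcal{H}^4_f(43)$, and Table~\ref{tbl:H_43} records that only $f = 4$ and $f = 5$ contribute, with $2$ and $495$ graphs respectively. Because the number of $4$-faces is a graph invariant, the families $\mathcal{H}^4_4(43)$ and $\mathcal{H}^4_5(43)$ are disjoint, so the counts add to $497$. The bound is stated as \emph{at least} $497$ rather than \emph{exactly} because the generation was completed only through $f = 10$, whereas $f$ may in principle range up to $n - 20 = 23$; the completed range already furnishes a rigorous lower bound for $|\mathcal{H}^4(43)|$.

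The main obstacle is twofold, and both aspects are computational rather than conceptual. First, the families at the intermediate levels grow very quickly, so that completing the generation even up to $f = 10$ is expensive; this practical cost is exactly what prevents pushing the computation to larger $f$ and forces the ``at least'' formulation. Second, and more delicate for the correctness of the reported number, is the isomorph rejection during the inflation phase: \emph{plantri} guarantees exhaustiveness and uniqueness of the base graphs, but the $\mathcal{PI}_2$ expansion needs an independent argument that every graph of $\mathcal{M}^4_f(43)$ is produced (by trying every admissible $2$-path on every parent) and that each is counted once (by canonical-form deduplication). Establishing that the generation is simultaneously complete and duplicate-free at each level is where the care lies; once that is secured, reading the total $497$ off Table~\ref{tbl:H_43} is immediate.
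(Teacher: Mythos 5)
Your proposal is correct and follows essentially the same route as the paper: the theorem is established purely computationally by generating $\mathcal{M}^4_f(43)$ from the \emph{plantri}-produced seeds $\mathcal{D}_5(43-f)$ via repeated $\mathcal{PI}_2$ inflation (with isomorph rejection), filtering with the hypohamiltonicity test, and summing the counts $2 + 495 = 497$ from Table~\ref{tbl:H_43}, with the completed range $f \le 10$ explaining the ``at least'' formulation exactly as you say. Your added remarks on deduplication and the disjointness of $\mathcal{H}^4_4(43)$ and $\mathcal{H}^4_5(43)$ make explicit what the paper leaves implicit, but they do not change the approach.
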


\begin{lem}\label{lem:fs_simple_dual}
Let\/ $G$ be a Type 1 Grinbergian hypohamiltonian graph whose faces are at least $5$-faces except one which is a\/ $4$-face. Then any\/ $G'$ in\/ $\mathcal{FD}_4(G)$ has a simple dual.

\begin{proof}
As $G$ is a simple $3$-connected graph, the dual $G^*$ of $G$ is simple, too. Let $G'\in\mathcal{FD}_4(G)$ and assume to the contrary that $G'^*$ is not simple.

If $G'^*$ has some multiedges, then the fact that $G^*$ is simple shows that either the two faces incident with $v_1v_5$ or with $v_3v_5$ in Figure~\ref{fig:operation_2_cut_result} (we assume the first by symmetry) have a common edge $v_8v_9$ in addition to $v_1v_5$. Let $v_1v_6$ and $v_1v_7$ be the edges adjacent to $v_1v_5$ in the cyclic order of $v_1$. Note that $v_6\neq v_7$ because $d(G';v_1)\geq 3$ by Lemma~\ref{lem:grinberg_type_1_degree_4}. If $v_1$ and $v_8$ were the same vertex, then $v_1$ would be a cut vertex in $G$ considering the closed walk $v_1v_6\cdots v_8(=\!\!v_1)$. But this is impossible as $G$ is $3$-connected, so $v_1\neq v_8$. Now we can see that $\{v_1,v_8\}$ is a $2$-cut for $G$ considering the closed walk $v_1v_6\cdots v_8\cdots v_7v_1$.

Also, if $G'^*$ has a loop, with the same discussion, we can assume that the two faces incident with $v_1v_5$ are the same but then $v_1$ would be a cut vertex for $G$. Therefore, both having multiedges or having loops violate the fact that $G$ is $3$-connected. So the assumption that $G'^*$ is not simple is incorrect, which completes the proof.
\begin{figure}[htbp]
\begin{center}
\begin{minipage}[b]{0.39\textwidth}
\centering
\subfloat[$G$]{\label{fig:operation_2_cut_original}
\resizebox{0.85\width}{0.85\height}{
\begin{picture}(0,0)%
\includegraphics{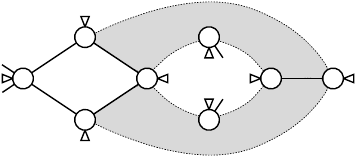}%
\end{picture}%
\setlength{\unitlength}{2901sp}%
\begin{picture}(3873,1692)(-1825,-2032)
\put(378,-775){\makebox(0,0)[lb]{\smash{{\SetFigFont{6}{7.2}{\rmdefault}{\mddefault}{\updefault}{\color[rgb]{0,0,0}$v_6$}%
}}}}
\put(378,-1675){\makebox(0,0)[lb]{\smash{{\SetFigFont{6}{7.2}{\rmdefault}{\mddefault}{\updefault}{\color[rgb]{0,0,0}$v_7$}%
}}}}
\put(1053,-1225){\makebox(0,0)[lb]{\smash{{\SetFigFont{6}{7.2}{\rmdefault}{\mddefault}{\updefault}{\color[rgb]{0,0,0}$v_8$}%
}}}}
\put(1728,-1225){\makebox(0,0)[lb]{\smash{{\SetFigFont{6}{7.2}{\rmdefault}{\mddefault}{\updefault}{\color[rgb]{0,0,0}$v_9$}%
}}}}
\put(-972,-775){\makebox(0,0)[lb]{\smash{{\SetFigFont{6}{7.2}{\rmdefault}{\mddefault}{\updefault}{\color[rgb]{0,0,0}$v_4$}%
}}}}
\put(-972,-1675){\makebox(0,0)[lb]{\smash{{\SetFigFont{6}{7.2}{\rmdefault}{\mddefault}{\updefault}{\color[rgb]{0,0,0}$v_2$}%
}}}}
\put(-1647,-1225){\makebox(0,0)[lb]{\smash{{\SetFigFont{6}{7.2}{\rmdefault}{\mddefault}{\updefault}{\color[rgb]{0,0,0}$v_3$}%
}}}}
\put(-297,-1225){\makebox(0,0)[lb]{\smash{{\SetFigFont{6}{7.2}{\rmdefault}{\mddefault}{\updefault}{\color[rgb]{0,0,0}$v_1$}%
}}}}
\end{picture}%
}}
\end{minipage}
\begin{minipage}[b]{0.20\textwidth}
\centering
\resizebox{0.9\width}{0.9\height}{
\begin{picture}(0,0)%
\includegraphics{operation_label.pdf}%
\end{picture}%
\setlength{\unitlength}{3522sp}%
\begin{picture}(1852,1394)(-1825,-1883)
\put(-1124,-849){\makebox(0,0)[lb]{\smash{{\SetFigFont{9}{10.8}{\rmdefault}{\mddefault}{\updefault}{\color[rgb]{0,0,0}$\mathcal{FD}_4$}%
}}}}
\put(-1124,-1636){\makebox(0,0)[lb]{\smash{{\SetFigFont{9}{10.8}{\rmdefault}{\mddefault}{\updefault}{\color[rgb]{0,0,0}$\mathcal{PI}_2$}%
}}}}
\end{picture}%
}
\end{minipage}
\begin{minipage}[b]{0.39\textwidth}
\centering
\subfloat[$G'$]{\label{fig:operation_2_cut_result}
\resizebox{0.85\width}{0.85\height}{
\begin{picture}(0,0)%
\includegraphics{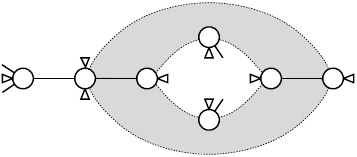}%
\end{picture}%
\setlength{\unitlength}{2901sp}%
\begin{picture}(3873,1692)(-1825,-2032)
\put(378,-775){\makebox(0,0)[lb]{\smash{{\SetFigFont{6}{7.2}{\rmdefault}{\mddefault}{\updefault}{\color[rgb]{0,0,0}$v_6$}%
}}}}
\put(378,-1675){\makebox(0,0)[lb]{\smash{{\SetFigFont{6}{7.2}{\rmdefault}{\mddefault}{\updefault}{\color[rgb]{0,0,0}$v_7$}%
}}}}
\put(1053,-1225){\makebox(0,0)[lb]{\smash{{\SetFigFont{6}{7.2}{\rmdefault}{\mddefault}{\updefault}{\color[rgb]{0,0,0}$v_8$}%
}}}}
\put(1728,-1225){\makebox(0,0)[lb]{\smash{{\SetFigFont{6}{7.2}{\rmdefault}{\mddefault}{\updefault}{\color[rgb]{0,0,0}$v_9$}%
}}}}
\put(-972,-1225){\makebox(0,0)[lb]{\smash{{\SetFigFont{6}{7.2}{\rmdefault}{\mddefault}{\updefault}{\color[rgb]{0,0,0}$v_5$}%
}}}}
\put(-1647,-1225){\makebox(0,0)[lb]{\smash{{\SetFigFont{6}{7.2}{\rmdefault}{\mddefault}{\updefault}{\color[rgb]{0,0,0}$v_3$}%
}}}}
\put(-297,-1225){\makebox(0,0)[lb]{\smash{{\SetFigFont{6}{7.2}{\rmdefault}{\mddefault}{\updefault}{\color[rgb]{0,0,0}$v_1$}%
}}}}
\end{picture}%
}}
\end{minipage}
\end{center}
\caption{Showing that $\mathcal{FD}_4(G)$ has a simple dual}
\label{fig:2_cut_operation}
\end{figure}
\end{proof}
\end{lem}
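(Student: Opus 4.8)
The plan is to lean entirely on the fact that $G$ is $3$-connected (which holds because every hypohamiltonian planar graph is $3$-connected, by the opening theorem of Section~\ref{sec:grinbergian}) and to show that each of the two ways $G'^*$ could fail to be simple — a loop or a multiedge — forces either a cut vertex or a $2$-cut in $G$. First I would record the base case: since $G$ is simple and $3$-connected, no two of its faces share more than one edge and every edge borders two distinct faces, i.e. $G^*$ is simple. Consequently any loop or multiedge of $G'^*$ must be \emph{created} by $\mathcal{FD}_4$. But this operation is purely local: it identifies the two opposite corners $v_2,v_4$ of the $4$-face into the single vertex $v_5$, deletes the $4$-face, and replaces it by the two new edges $v_1v_5$ and $v_3v_5$ (Figure~\ref{fig:operation}). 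Hence every newly created loop or multiedge must involve the pair of faces meeting along $v_1v_5$ or along $v_3v_5$, and by the symmetry of the configuration I may restrict attention to $v_1v_5$; call the two faces bordering it $P$ and $S$ (these are exactly the faces lying on the far sides of the old edges $v_1v_2$ and $v_1v_4$, and they persist as faces of $G$).

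For the loop case, a loop of $G'^*$ at $v_1v_5$ means $P=S$, so in $G$ this single face meets $v_1$ twice along its boundary; in a $3$-connected plane graph every facial walk is a simple cycle, so this makes $v_1$ a cut vertex — impossible. For the multiedge case, a double edge of $G'^*$ means $P$ and $S$ share a second edge $v_8v_9$ besides $v_1v_5$, and the same sharing holds back in $G$. Here I would introduce the two edges $v_1v_6$ and $v_1v_7$ flanking $v_1v_5$ in the rotation at $v_1$, observing that $v_6\neq v_7$ because $d_{G'}(v_1)\geq 3$, which follows from the degree bound for exceptional-face vertices in Lemma~\ref{lem:grinberg_type_1_degree_4}. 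Following $\partial P$ from $v_1$ through $v_6$ up to $v_8$ and then returning along $\partial S$ through $v_7$ to $v_1$ produces a closed walk; I first exclude $v_8=v_1$ (which would again put $v_1$ twice on one facial boundary, hence a cut vertex) and then argue that $\{v_1,v_8\}$ separates $G$, contradicting $3$-connectivity.

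The hard part will be the bookkeeping in the multiedge case: I must pin down precisely which faces of $G$ survive as $P$ and $S$ after $v_2$ and $v_4$ are merged, and then justify rigorously that the closed walk through $v_1$ and $v_8$ genuinely certifies a $2$-cut — that is, that there is material of $G$ strictly on both sides of this walk so that deleting $v_1$ and $v_8$ really disconnects the graph. This is exactly where planarity and the presence of the distinct flanking neighbours $v_6,v_7$ must be used, since they guarantee that $v_1$ contributes to both sides of the separating curve rather than degenerating. Once that separation is established, both the loop and the multiedge hypotheses collapse against the $3$-connectivity of $G$, so $G'^*$ must be simple.
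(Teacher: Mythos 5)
Your proposal follows essentially the same route as the paper's own proof: establish that $G^*$ is simple, localize any defect of $G'^*$ to the new edges $v_1v_5$ or $v_3v_5$, use Lemma~\ref{lem:grinberg_type_1_degree_4} to get $d_{G'}(v_1)\geq 3$ (hence $v_6\neq v_7$), and derive a cut vertex in the loop case or a $2$-cut $\{v_1,v_8\}$ in the multiedge case, contradicting $3$-connectivity. The ``bookkeeping'' you flag as the hard part is treated at the same level of detail (i.e., asserted via the closed-walk picture) in the paper itself, so your plan matches the published argument.
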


\begin{theo}\label{theo:type_1_grnbergian_4_face_deflatable}
Any Type 1 Grinbergian hypohamiltonian graph is\/ $4$-face deflatable. More precisely, any Type 1 Grinbergian hypohamiltonian graph of order\/ $n$ is in\/ $\mathcal{H}^4_0(n)\cup\mathcal{H}^4_1(n)$.

\begin{proof}
Let $G$ be a Type 1 Grinbergian hypohamiltonian graph with $n$ vertices. By Theorem~\ref{theo:grinbergian_hypohamiltonian_type} the exceptional face belongs to $\mathcal{P}_1$ so its size is 4 or it is larger. If the exceptional face is a 4-face, then by Lemma~\ref{lem:grinberg_type_1_degree_4} the 4-face has two non-adjacent 4-valent vertices. So we can apply $\mathcal{FD}_4$ to obtain a graph $G'$ which has no face of size less than 5. So $\delta(G'^*)\geq 5$ and $G'^*$ is a simple plane graph by Lemma~\ref{lem:fs_simple_dual}. Thus $G'^*\in \mathcal{D}_5$ and as a result of the definition of $\mathcal{M}^4_f$, $G'^{**}=G'\in \mathcal{M}^4_0(n-1)$. Furthermore, $G\in \mathcal{M}^4_1(n)$ because $G\in\mathcal{PI}_2(G')$ and as $G$ is hypohamiltonian, $G\in \mathcal{H}^4_1(n)$.

But if the exceptional face is not a 4-face, then by the fact that it is 3-connected and simple, $G^*$ is simple as well and as the minimum face size of $G$ is 5, $\delta(G^*)\geq 5$ which means $G\in\mathcal{M}^4_0(n)$ and so $G\in\mathcal{H}^4_0(n)$.
\end{proof}
\end{theo}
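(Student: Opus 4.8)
The plan is to argue by cases on the size of the exceptional face, using Theorem~\ref{theo:grinbergian_hypohamiltonian_type} to fix its residue class and Lemmas~\ref{lem:grinberg_type_1_degree_4} and~\ref{lem:fs_simple_dual} to control the deflation. Let $G$ be a Type~1 Grinbergian hypohamiltonian graph on $n$ vertices. By Theorem~\ref{theo:grinbergian_hypohamiltonian_type} its exceptional face lies in $\mathcal{P}_1$, so its size is congruent to $1 \bmod 3$; since $G$ is simple and $3$-connected, every face has size at least $3$, and hence the smallest admissible size for the exceptional face is $4$. All remaining faces lie in $\mathcal{P}_2$ and therefore have size at least $5$. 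This cleanly splits the argument into the case where the exceptional face is a $4$-face and the case where it has size at least $7$.

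First I would dispose of the case where the exceptional face has size at least $7$. Then $G$ contains no $4$-face at all, so its minimum face size is $5$. Dualizing, $G^*$ is a simple plane graph (because $G$ is simple and $3$-connected) with $\delta(G^*) \geq 5$, so $G^* \in \mathcal{D}_5(n)$. By the $f=0$ clause in the definition of $\mathcal{M}^4_f$ this yields $G = G^{**} \in \mathcal{M}^4_0(n)$, and since $G$ is hypohamiltonian, $G \in \mathcal{H}^4_0(n)$.

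The substantive case is when the exceptional face $Q$ is a $4$-face; here the aim is to deflate $Q$ and land in $\mathcal{M}^4_0(n-1)$ after a single step. I would invoke Lemma~\ref{lem:grinberg_type_1_degree_4} to guarantee that every vertex of $Q$, in particular the two non-adjacent vertices that $\mathcal{FD}_4$ identifies, has degree at least $4$; this is exactly the degree condition that makes $\mathcal{FD}_4$ applicable and keeps the result well-defined. Applying $\mathcal{FD}_4$ to $Q$ produces a graph $G'$ on $n-1$ vertices whose sole $4$-face has been squeezed into a path of length $2$, so every face of $G'$ has size at least $5$ and thus $\delta(G'^*) \geq 5$. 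Lemma~\ref{lem:fs_simple_dual} then supplies the decisive fact that $G'^*$ is simple, so $G'^* \in \mathcal{D}_5(n-1)$ and $G' = G'^{**} \in \mathcal{M}^4_0(n-1)$. Since $G$ is recovered from $G'$ by the inverse operation, $G \in \mathcal{PI}_2(G') \subseteq \mathcal{M}^4_1(n)$, and hypohamiltonicity upgrades this to $G \in \mathcal{H}^4_1(n)$.

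The hard part is concentrated entirely in legitimizing the deflation in the $4$-face case: one must know both that $\mathcal{FD}_4$ can be performed at all (the identified non-adjacent vertices must have degree at least $4$, which is precisely Lemma~\ref{lem:grinberg_type_1_degree_4}) and that the deflated graph has a simple dual so that it genuinely qualifies as a member of $\mathcal{D}_5$ (Lemma~\ref{lem:fs_simple_dual}). Once those two lemmas are in hand, the remainder is bookkeeping against the recursive definitions of $\mathcal{M}^4_f$ and $\mathcal{H}^4_f$.
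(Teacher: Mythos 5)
Your proposal is correct and follows essentially the same argument as the paper: split on whether the exceptional face (which lies in $\mathcal{P}_1$ by Theorem~\ref{theo:grinbergian_hypohamiltonian_type}) is a $4$-face, use Lemma~\ref{lem:grinberg_type_1_degree_4} to justify applying $\mathcal{FD}_4$ and Lemma~\ref{lem:fs_simple_dual} for simplicity of the dual in the first case, and dualize directly into $\mathcal{D}_5(n)$ in the second. The only difference is cosmetic (you handle the size-$\geq 7$ case first and spell out the residue arithmetic slightly more explicitly).
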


\begin{cor}
The smallest Type 1 Grinbergian hypohamiltonian graph has\/ $42$ vertices and there are exactly\/ $7$ of them on\/ $42$ vertices.

\begin{proof}
By Theorem~\ref{theo:type_1_grnbergian_4_face_deflatable} any Type 1 Grinbergian graph belongs to $\mathcal{H}^4_0(n)\cup\mathcal{H}^4_1(n)$ but according to the results presented in the paragraph preceding Theorem~\ref{theo:no_hypo_less_than_40}, we have $\mathcal{H}^4_0(n)\cup\mathcal{H}^4_1(n)=\emptyset$ for all $n < 42$. So there is no such graph of order less than 42. On the other hand, we have $\mathcal{H}^4_0(42)=\emptyset$ and $|\mathcal{H}^4_1(42)|=7$ which completes the proof.
\end{proof}
\end{cor}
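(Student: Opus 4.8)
The plan is to combine the structural reduction of Theorem~\ref{theo:type_1_grnbergian_4_face_deflatable} with the exhaustive enumeration summarised in the paragraph preceding Theorem~\ref{theo:no_hypo_less_than_40}. Theorem~\ref{theo:type_1_grnbergian_4_face_deflatable} says that every Type~1 Grinbergian hypohamiltonian graph of order $n$ lies in $\mathcal{H}^4_0(n)\cup\mathcal{H}^4_1(n)$, so the whole question is governed by these two low-$f$ families, for which the computation is complete even at the orders $n=40,41,42$ where the full generation could not be finished. I would therefore first reduce both assertions to statements about $\mathcal{H}^4_0(n)\cup\mathcal{H}^4_1(n)$ and then read off the tabulated data.

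For the minimality claim I would use Theorem~\ref{theo:grinbergian_hypohamiltonian_type} to note that the order of any such graph is a multiple of $3$, so the only candidate orders below $42$ are $n\le 39$; for all of these the enumeration gives $\mathcal{H}^4_f(n)=\emptyset$ for every $f$, hence $\mathcal{H}^4_0(n)\cup\mathcal{H}^4_1(n)=\emptyset$ and no Type~1 Grinbergian hypohamiltonian graph can exist. (Even without the divisibility remark one may argue directly that $\mathcal{H}^4_0(40)=\mathcal{H}^4_1(40)=\emptyset$ and $\mathcal{H}^4_0(41)=\mathcal{H}^4_1(41)=\emptyset$, since the only nonempty family on $40$ vertices is $\mathcal{H}^4_5(40)$ and there is none on $41$ vertices.) Existence at order $42$ then follows from the count established below.

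For the exact count, the containment gives the upper bound at once: the number of Type~1 Grinbergian hypohamiltonian graphs on $42$ vertices is at most $|\mathcal{H}^4_0(42)\cup\mathcal{H}^4_1(42)|$, and since $\mathcal{H}^4_0(42)=\emptyset$ while $|\mathcal{H}^4_1(42)|=7$ by Table~\ref{tbl:H_42}, this is at most $7$. The step I expect to need the most care is the reverse inequality, because Theorem~\ref{theo:type_1_grnbergian_4_face_deflatable} is only a one-way inclusion: I must verify that each of the $7$ graphs in $\mathcal{H}^4_1(42)$ is genuinely Type~1 Grinbergian rather than merely $4$-face deflatable and hypohamiltonian. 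This is settled by their recorded face sequence $1\times 4,\,26\times 5$: every $5$-face has size $\equiv 2\pmod 3$ and so lies in $\mathcal{P}_2$, while the single $4$-face has size $\equiv 1\pmod 3$ and is the unique exceptional face, which is exactly the Type~1 condition. Hence all $7$ graphs qualify, giving exactly $7$ and completing the proof.
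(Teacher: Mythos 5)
Your proposal is correct and follows the same route as the paper: Theorem~\ref{theo:type_1_grnbergian_4_face_deflatable} reduces everything to $\mathcal{H}^4_0(n)\cup\mathcal{H}^4_1(n)$, and the exhaustive computational data then settles both claims. The one place where you go beyond the paper's own proof is the reverse inequality at $n=42$: the paper simply cites $\mathcal{H}^4_0(42)=\emptyset$ and $|\mathcal{H}^4_1(42)|=7$ and declares the proof complete, tacitly treating membership in $\mathcal{H}^4_1(42)$ as equivalent to being Type~1 Grinbergian. As you correctly observe, this is not automatic --- a graph in $\mathcal{H}^4_1(42)$ is guaranteed only one $4$-face with all remaining faces of size at least $5$, and a $6$- or $7$-face would break the Type~1 condition --- so one must check the recorded face sequence $1\times 4,\,26\times 5$ in Table~\ref{tbl:H_42} to conclude that all seven graphs are genuinely Type~1 Grinbergian. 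Your explicit verification of this step closes a gap the paper leaves implicit. Your use of the divisibility-by-$3$ consequence of Theorem~\ref{theo:grinbergian_hypohamiltonian_type} to rule out orders $40$ and $41$ is a harmless variant; as you note yourself, the computational facts $\mathcal{H}^4_0(n)\cup\mathcal{H}^4_1(n)=\emptyset$ for $n=40,41$ make it unnecessary, and the paper relies on the latter directly.
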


\section{Results}\label{sec:corollaries}
\noindent We present one of the planar hypohamiltonian graphs of order 40 in Figure~\ref{fig:40-01}, and the other 24 in Figure~\ref{fig:40-all}.

\begin{figure}[htbp]
\begin{center}
\includegraphics[height=3cm]{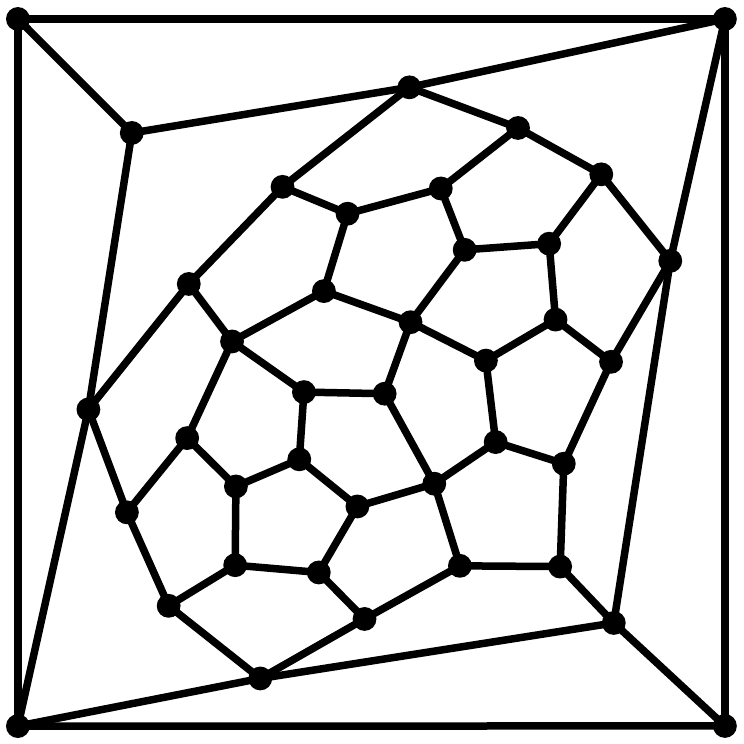}\\
\end{center}
\caption{A planar hypohamiltonian graph on 40 vertices}
\label{fig:40-01}
\end{figure}

\begin{theo}\label{theo:hypo_40}
The graph shown in Figure~\ref{fig:40-01} is hypohamiltonian.
\end{theo}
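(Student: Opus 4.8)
The plan is to establish the two defining properties separately: that $G$ itself has no Hamiltonian cycle, and that $G-v$ has one for every vertex $v$. Before attempting the first, I would note that Grinberg's Theorem (Theorem~\ref{thm:grinberg}) cannot settle it here. By Table~\ref{tbl:H_40} the face vector of $G$ consists of five $4$-faces and twenty-two $5$-faces; since $5\equiv 2$ and $4\equiv 1 \pmod 3$, the graph has five exceptional faces and so is neither Type~1 nor Type~2, i.e.\ it is not Grinbergian. Concretely, the total Grinberg weight is $\sum_i (i-2)F_i = 2\cdot 5 + 3\cdot 22 = 76$, which splits evenly as $38+38$ (for instance four $4$-faces and ten $5$-faces inside a cycle against one $4$-face and twelve $5$-faces outside), so the Grinberg equation admits a solution and gives no obstruction. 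This is consistent with the fact that the order-$40$ graphs were found precisely by leaving the Grinbergian family, and it means a different mechanism is needed for non-Hamiltonicity.

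For the non-Hamiltonicity itself, I would appeal to the exhaustive hamiltonicity test described in Section~\ref{sec:generation}: run the depth-first search with the stated pruning rule (prune as soon as some vertex outside the current partial cycle has fewer than two neighbours that are either unused or endpoints of the partial cycle) and confirm that it terminates without ever closing a spanning cycle. Because $G$ has only $40$ vertices and most of them have degree $3$ (by Table~\ref{tbl:H_40}), the forcing at each degree-$3$ vertex — exactly two of its three incident edges must lie on any Hamiltonian cycle — keeps the search tree small, so this is a short and fully reproducible computation. I expect this step to be the main obstacle: there is no Grinberg-style shortcut and no evident small separating set to exploit, so the certificate of non-Hamiltonicity is essentially the pruned search transcript rather than a one-line structural argument.

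For the converse direction I would exhibit a Hamiltonian cycle in $G-v$ for every vertex $v$. To reduce the bookkeeping, first compute the automorphism group of $G$ and choose one representative from each vertex orbit; a Hamiltonian cycle of $G-v$ transports, under any automorphism fixing the orbit structure, to a Hamiltonian cycle of $G-v'$ for every other $v'$ in the same orbit, so it suffices to produce one explicit cycle per orbit. Each such cycle is then verified directly by checking that its edges lie in $G$ and that it visits every vertex of $G-v$ exactly once. This part is routine — the cycles are obtained from the same depth-first search, now seeking rather than excluding a spanning cycle — and the only real content is recording the resulting cycles.
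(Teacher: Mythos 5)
Your second half coincides with the paper's: the paper proves Hamiltonicity of every $G-v$ by exhibiting forty explicit vertex-omitting cycles (Figure~\ref{fig:40-01-cycles}), exactly the certificate you propose. Note, however, that your orbit reduction buys nothing here: as remarked in the Conclusions, the order-40 graphs have trivial automorphism group, so every vertex is its own orbit and all forty cycles are needed anyway.

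The real issue is your treatment of non-Hamiltonicity. Your fallback — an exhaustive pruned search — would constitute a valid (if uncheckable-by-hand) proof, so the proposal is not broken; but the claim you use to justify it, that ``Grinberg's Theorem cannot settle it here'' and that ``there is no Grinberg-style shortcut,'' is false, and it is precisely where the paper's proof has its content. What you actually verify is only that the face \emph{multiset} admits a solution of Grinberg's equation (i.e.\ the graph is not Grinbergian, consistent with Theorem~\ref{thm:grinbergian_motivation}). But Grinberg's Theorem constrains every individual Hamiltonian cycle, not just the face counts: writing the equation as $2(f_4-f_4')+3(f_5-f_5')=0$ and reducing mod~$3$ gives $f_4\equiv f_4'\pmod 3$, which together with $f_4+f_4'=5$ forces $\{f_4,f_4'\}=\{1,4\}$. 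So any Hamiltonian cycle would have exactly one $4$-face $Q$ on the opposite side from the other four. The paper then exploits the embedding in Figure~\ref{fig:40-01}: an edge lies on the cycle if and only if its two incident faces are on opposite sides. The outer face is a $4$-face sharing an edge with each of the other four $4$-faces; if it were $Q$, all four of its edges would be cycle edges, which is absurd, and if $Q$ is one of the other $4$-faces, then exactly one edge of the outer face is a cycle edge, leaving a degree-$3$ vertex of the outer face with at most one incident cycle edge --- a contradiction. This short structural argument is exactly the ``shortcut'' you asserted does not exist. The practical difference: the paper's proof is verifiable by hand from the figure, whereas yours rests on trusting a search transcript; and if your write-up were kept as is, it would contain a false mathematical statement that should be removed.
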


\begin{proof}
We first show that the graph is non-Hamiltonian. Assume to the contrary that the graph contains a Hamiltonian cycle, which must then satisfy Grinberg's Theorem. The graph in Figure~\ref{fig:40-01} contains five 4-faces and 22 5-faces. Then $$\sum_{i \ge 3} (i-2)(f_i - f'_i) \equiv 2(f_4-f'_4) \equiv 0 \!\!\!\pmod 3,$$ where $f_4+f'_4 = 5$. So $f'_4=1$ and $f_4=4$, or $f'_4=4$ and $f_4=1$. Let $Q$ be the 4-face on a different side from the other four.

Notice that an edge belongs to a Hamiltonian cycle if and only if the two faces it belongs to are on different sides of the cycle. Since the outer face of the embedding in Figure~\ref{fig:40-01} has edges in common with all other 4-faces and its edges cannot all be in a Hamiltonian cycle, that face cannot be $Q$.

If $Q$ is any of the other 4-faces, then the only edge of the outer face in the embedding in Figure~\ref{fig:40-01} that belongs to a Hamiltonian cycle is the edge belonging to $Q$ and the outer face. The two vertices of the outer face that are not endpoints of that edge have degrees 3 and 4, and we arrive at a contradiction as we know that two of the edges incident to the vertex with degree 3 are not part of the Hamiltonian cycle. Thus, the graph is non-Hamiltonian.

Finally, for each vertex it is routine to exhibit a
cycle of length 39 that avoids it.
%Finally, for each vertex of the graph, Figure~\ref{fig:40-01-cycles} shows a cycle omitting that vertex.
\end{proof}

We now employ an operation introduced by Thomassen \cite{T5} for producing infinite sequences of hypohamiltonian graphs. Let $G$ be a graph containing a $4$-cycle $v_1 v_2 v_3 v_4 = C$. We denote by ${\rm Th}(G^C)$ the graph obtained from $G$ by deleting the edges $v_1 v_2$, $v_3 v_4$ and adding a new 4-cycle $v'_1 v'_2 v'_3 v'_4$ and the edges $v_i v'_i$, $1 \le i \le 4$. Araya and Wiener \cite{WA} note that a result in \cite{T5} generalizes as follows, with the same proof.

%The next two lemmas are slight modifications of a result in \cite{T5}, which can be consulted for proofs.

%\begin{lem}\label{lem:TH_G_C_non_hamiltonian}
%Let\/ $G$ be a planar non-Hamiltonian graph containing a\/ $4$-face bounded by the cycle\/ $v_1 v_2 v_3 v_4 = C$. Then\/ ${\rm Th}(G^C)$ is also a planar non-Hamiltonian graph.
%\end{lem}

\begin{lem}\label{lem:TH_G_C_cubic_hypohamiltonian}
Let\/ $G$ be a planar hypohamiltonian graph containing a\/ $4$-face bounded by a cycle\/ $v_1 v_2 v_3 v_4 = C$ with cubic vertices. Then\/ ${\rm Th}(G^C)$ is also a planar hypohamiltonian graph.
\end{lem}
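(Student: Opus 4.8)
The plan is to verify the three defining requirements in turn: that $\mathrm{Th}(G^C)$ is planar, that it is non-Hamiltonian, and that $\mathrm{Th}(G^C)-w$ is Hamiltonian for every vertex $w$. Throughout I will call the four edges $v_iv_i'$ the \emph{spokes}; since each $v_i$ is cubic and $C$ bounds a face, each $v_i$ has exactly one further edge $e_i$ leaving that face, and all eight gadget vertices $v_1,\dots,v_4,v_1',\dots,v_4'$ are cubic in $\mathrm{Th}(G^C)$. Planarity is immediate: draw the new $4$-cycle $v_1'v_2'v_3'v_4'$ together with the spokes inside the face bounded by $C$; deleting the two opposite boundary edges $v_1v_2,v_3v_4$ only removes edges, so a plane embedding survives.

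For non-Hamiltonicity I would argue by contraction. Suppose $H$ is a Hamiltonian cycle of $\mathrm{Th}(G^C)$. The four spokes form an edge cut isolating $\{v_1',v_2',v_3',v_4'\}$, so $H$ uses an even number of them; it cannot use $0$ (that would leave the inner $4$-cycle as a separate component), hence it uses $2$ or $4$. A brief analysis of the inner edges shows that inside the gadget $H$ is a union of vertex-disjoint paths whose endpoint sets are $\{v_1,v_2\},\{v_3,v_4\}$ or $\{v_2,v_3\},\{v_4,v_1\}$ (four spokes), or a single path between two cyclically adjacent $v_a,v_b$ (two spokes). In every case each endpoint pair $\{v_a,v_b\}$ is the pair of ends of an edge of $C$, hence an edge of $G$; contracting each gadget path back to that edge turns $H$ into a Hamiltonian cycle of $G$, contradicting hypohamiltonicity. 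One checks that no contracted edge is already present in $H$, since that would force a proper subcycle.

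The bulk of the work is the vertex-deleted Hamiltonicity, where I would run the contraction in reverse. The key preliminary is to record how a Hamiltonian cycle of $G-u$ must cross the face $C$: as the $e_i$ again form a $4$-edge cut, such a cycle realises one of exactly three patterns on $C$ --- a single path through all four $v_i$ between two adjacent ones, or one of the two ``two parallel paths'' patterns using opposite pairs of edges of $C$. For $w\notin V(C)$ one simply reroutes a Hamiltonian cycle of $G-w$ through the gadget: each of the three patterns can be realised by an explicit path (or path pair) that additionally absorbs all four $v_i'$, for instance replacing the subpath $v_1v_4v_3v_2$ by $v_1v_4v_4'v_1'v_2'v_3'v_3v_2$, or the edges $v_1v_2,v_3v_4$ by the paths $v_1v_1'v_2'v_2$ and $v_3v_3'v_4'v_4$. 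For $w$ a gadget vertex one exploits that, because the $v_i$ are cubic, a Hamiltonian cycle of $G-v_j$ is \emph{forced} to run straight through the other three vertices of $C$, so its external behaviour is the unique pattern with ports at the two neighbours of $v_j$ on $C$. Deleting a vertex of the gadget likewise forces several gadget edges, and a short finite check produces a spanning path of the depleted gadget between exactly those two ports; gluing the two pieces yields the desired cycle. Concretely $\mathrm{Th}(G^C)-v_1'$ and $\mathrm{Th}(G^C)-v_2$ are assembled from a Hamiltonian cycle of $G-v_2$, and $\mathrm{Th}(G^C)-v_2'$ and $\mathrm{Th}(G^C)-v_1$ from one of $G-v_1$; the four remaining deletions are handled the same way, transporting the internal paths by the order-$2$ gadget symmetry $v_i\mapsto v_{i+2}$, $v_i'\mapsto v_{i+2}'$ and using the matching cycle $G-v_j$.

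The crux, and the only real obstacle, is this last family of cases: when $w$ is an inner vertex $v_i'$ or a boundary vertex $v_i$, the cycle we build must still visit all of $V(G)$, yet $G$ itself is not Hamiltonian, so no Hamiltonian cycle of $G$ is available to borrow. The forcing observation resolves it --- deleting a cubic $v_j$ from $G$ rigidly fixes how the guaranteed Hamiltonian cycle of $G-v_j$ threads $C$, and that unique thread is precisely compatible with the (also essentially forced) spanning path of the gadget minus $w$. Once this compatibility is pinned down, everything else reduces to exhibiting a handful of explicit short paths in the seven- and eight-vertex gadgets together with the spoke-cut parity count, both of which are routine.
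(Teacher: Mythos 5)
Your planarity argument and your treatment of the vertex-deleted cases are sound, and they follow the same lines as the argument the paper relies on (the paper itself gives no proof, citing \cite{T5} and \cite{WA} ``with the same proof''); the explicit gadget paths you exhibit and the forcing observation for a Hamiltonian cycle of $G-v_j$ all check out. The genuine gap is in the non-Hamiltonicity step, in your ``single path'' case. Suppose a Hamiltonian cycle $H$ of ${\rm Th}(G^C)$ meets the rest of the graph through only two of the external edges, say at the ports $v_1$ and $v_2$. Then the intersection of $H$ with the gadget is a Hamiltonian path of the gadget from $v_1$ to $v_2$ (for instance $v_1v_4v'_4v'_1v'_2v'_3v_3v_2$), and by definition it passes through $v_3$ and $v_4$ as \emph{interior} vertices. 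Replacing this whole path by the edge $v_1v_2$, as your rule prescribes, therefore does not produce a Hamiltonian cycle of $G$: it produces a cycle of $G$ missing $v_3$ and $v_4$. A cycle through $V(G)\setminus\{v_3,v_4\}$ is no contradiction with the hypohamiltonicity of $G$ (indeed $G-v_3$ is Hamiltonian), so in this case your contraction yields nothing, and the non-Hamiltonicity proof collapses exactly where the whole difficulty of the lemma sits.

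The repair is to contract not the components of $H$ inside the gadget between ports, but the maximal \emph{inner excursions}: the subpaths of $H$ of the form $v_i\,v'_i\cdots v'_j\,v_j$ (spoke, inner vertices, spoke). The restriction of $H$ to the inner $4$-cycle is either a Hamiltonian path of it (two spokes used) or a perfect matching of it (four spokes used), since each $v'_k$ lies on exactly one spoke; in both cases consecutive spoke uses pair $v_i$ with a $v_j$ \emph{adjacent} on $C$. Replacing each excursion by the edge $v_iv_j\in E(G)$ keeps all four $v_i$ on the cycle, and no replacement edge is already in $H$ (either it is $v_1v_2$ or $v_3v_4$, absent from ${\rm Th}(G^C)$, or its presence together with the excursion would create a proper subcycle). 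This yields a genuine Hamiltonian cycle of $G$ and the desired contradiction. In the example above, contracting the excursion $v_4v'_4v'_1v'_2v'_3v_3$ to the edge $v_4v_3$ turns $H$ into a Hamiltonian cycle of $G$ through $v_1v_4v_3v_2$. With this substitution your case analysis goes through; note also that your parenthetical spoke counts are off (a single gadget path between adjacent ports can use four spokes, e.g.\ $v_2v'_2v'_1v_1v_4v'_4v'_3v_3$, and the pairing $\{v_2,v_3\},\{v_4,v_1\}$ can be realized with two), though that slip by itself is harmless.
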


Araya and Wiener use this operation to show that planar hypohamiltonian graphs exist for every order greater than or equal to $76$. That result is improved further in the next theorem.

\begin{theo}\label{theo:3_hypohamiltonian_40_42}
There exist planar hypohamiltonian graphs of order\/ $n$ for every\/ $n \geq 42$.
\end{theo}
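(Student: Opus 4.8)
The plan is to build all large orders from a few base graphs by repeatedly applying the Thomassen operation of Lemma~\ref{lem:TH_G_C_cubic_hypohamiltonian}, which increases the order by exactly $4$. The first point to record is that this operation reproduces its own hypothesis: in ${\rm Th}(G^C)$ the new $4$-cycle $v'_1v'_2v'_3v'_4$ consists of vertices of degree $3$ (each $v'_i$ is joined only to $v'_{i-1}$, $v'_{i+1}$ and $v_i$) and bounds a $4$-face, while the old vertices $v_i$ each lose one edge of $C$ and gain the edge $v_iv'_i$, so they too remain cubic. Hence, if $G$ satisfies the hypothesis of the lemma, so does ${\rm Th}(G^C)$, and starting from any planar hypohamiltonian graph possessing a $4$-face bounded by a cycle of cubic vertices one obtains, by iteration, planar hypohamiltonian graphs of every order congruent to $|G|$ modulo $4$ that is at least $|G|$.

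First I would fix, for each of the orders $40$, $42$ and $43$, one of the graphs guaranteed by Theorems~\ref{theo:25_hypo_for_40}, \ref{theo:179_hypo_for_42} and \ref{theo:497_hypo_for_43}, and verify that it contains a $4$-face whose four bounding vertices are all cubic; the degree sequences in Tables~\ref{tbl:H_40}--\ref{tbl:H_43} exhibit an abundance of cubic vertices, and it suffices to point to a single suitable $4$-face in each case (for instance in the graph of Figure~\ref{fig:40-01}). Iterating the operation then yields planar hypohamiltonian graphs of all orders $40+4k$, $42+4k$ and $43+4k$ for $k\ge 0$; since $44=40+4$, this covers every order $n\ge 42$ with $n\equiv 0,2,3\pmod 4$.

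The one residue class still missing is $n\equiv 1\pmod 4$, whose smallest relevant value is $45$. To seed it I would exhibit a single planar hypohamiltonian graph of order $45$ that again carries a $4$-face bounded by four cubic vertices; applying the lemma to it then produces all orders $45+4k$. Its non-Hamiltonicity can be certified from its face sequence via Grinberg's Theorem~\ref{thm:grinberg} exactly as in the proof of Theorem~\ref{theo:hypo_40}, and hypohamiltonicity is then completed by displaying a Hamiltonian cycle in each of the $45$ vertex-deleted graphs. With the four progressions of common difference $4$ and least terms $42,43,44,45$ in hand, their union is precisely $\{n : n\ge 42\}$, which finishes the argument.

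The main obstacle is exactly this residue class $1\pmod 4$: the three available base orders $40,42,43$ realise only the residues $0,2,3$, so no amount of iterating a $+4$ operation can ever reach the orders $45,49,53,\dots$. Everything therefore hinges on securing one order-$45$ planar hypohamiltonian graph carrying a cubic $4$-face (equivalently, a planar hypohamiltonian graph of order $41$ on which ${\rm Th}$ can be applied, which the generation data do not supply), and on confirming that its Grinbergian face sequence forbids a Hamiltonian cycle. Once that single seed is established, the rest of the proof is the routine bookkeeping of the four arithmetic progressions described above.
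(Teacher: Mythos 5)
Your overall strategy matches the paper's: step by four with the Thomassen operation via Lemma~\ref{lem:TH_G_C_cubic_hypohamiltonian}, and cover the four residue classes modulo $4$ with seeds of orders $42$, $43$, $44$, $45$. But there are two genuine gaps. First, and most seriously, your residue class $1 \pmod 4$ rests on a graph you never produce: you say you ``would exhibit'' a planar hypohamiltonian graph of order $45$ carrying a cubic $4$-face, and you correctly flag that nothing in Theorems~\ref{theo:25_hypo_for_40}--\ref{theo:497_hypo_for_43} supplies it. Acknowledging the hole does not fill it; without that seed the orders $45, 49, 53, \dots$ remain uncovered and the theorem is not proved. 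The paper closes this hole by exhibiting an explicit $45$-vertex planar hypohamiltonian graph (the right-hand graph of Figure~\ref{fig:records2}). Moreover, your parenthetical claim that such a seed is ``equivalent'' to a planar hypohamiltonian graph of order $41$ is false --- a $45$-vertex graph with a cubic $4$-face need not arise as ${\rm Th}$ of anything --- and it had better be false: the paper states that the existence of a $41$-vertex planar hypohamiltonian graph is an open question, so a true equivalence would doom your plan.

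Second, your treatment of the seeds of orders $40$, $42$, $43$ assumes you can find in each a $4$-face all four of whose vertices are cubic, so that Lemma~\ref{lem:TH_G_C_cubic_hypohamiltonian} applies directly. That is not established anywhere, and for your named example it conflicts with the paper's own analysis: in the proof of Theorem~\ref{theo:hypo_40}, the outer $4$-face of the graph of Figure~\ref{fig:40-01} is shown to contain a vertex of degree $4$, and the abundance of cubic vertices in the degree sequences of Tables~\ref{tbl:H_40}--\ref{tbl:H_43} does not place four of them around a common $4$-face. The paper sidesteps this issue: it applies the Thomassen operation to the outer faces of the base graphs (whose vertices need not all be cubic) and verifies directly (``it can be checked'') that the resulting graphs of orders $44$, $46$, $47$, $49$ are hypohamiltonian; only these new graphs, which by construction have a $4$-face bounded by cubic vertices, are fed into Lemma~\ref{lem:TH_G_C_cubic_hypohamiltonian} for the iteration. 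Your observation that the lemma's hypothesis reproduces itself under ${\rm Th}$ is correct and is indeed what makes the iteration work, but the first step out of each seed needs either a verified all-cubic $4$-face or a direct hypohamiltonicity check, and you provide neither.
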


\begin{proof}
Figures \ref{fig:40-01}, \ref{fig:records}, and \ref{fig:records2} show plane hypohamiltonian graphs on 40, 42, 43, and 45 vertices, respectively. It can be checked that applying the Thomassen operation to the outer face of these plane graphs gives planar hypohamiltonian graphs with 44, 46, 47, and 49 vertices. By the construction, these graphs will have a 4-face bounded by a cycle with cubic vertices, so the theorem now follows from repeated application of the Thomassen operation and Lemma~\ref{lem:TH_G_C_cubic_hypohamiltonian}.
\end{proof}

\begin{figure}[htbp]
\begin{center}
\begin{minipage}[b]{0.45\linewidth}
\centering
\includegraphics[height=3cm]{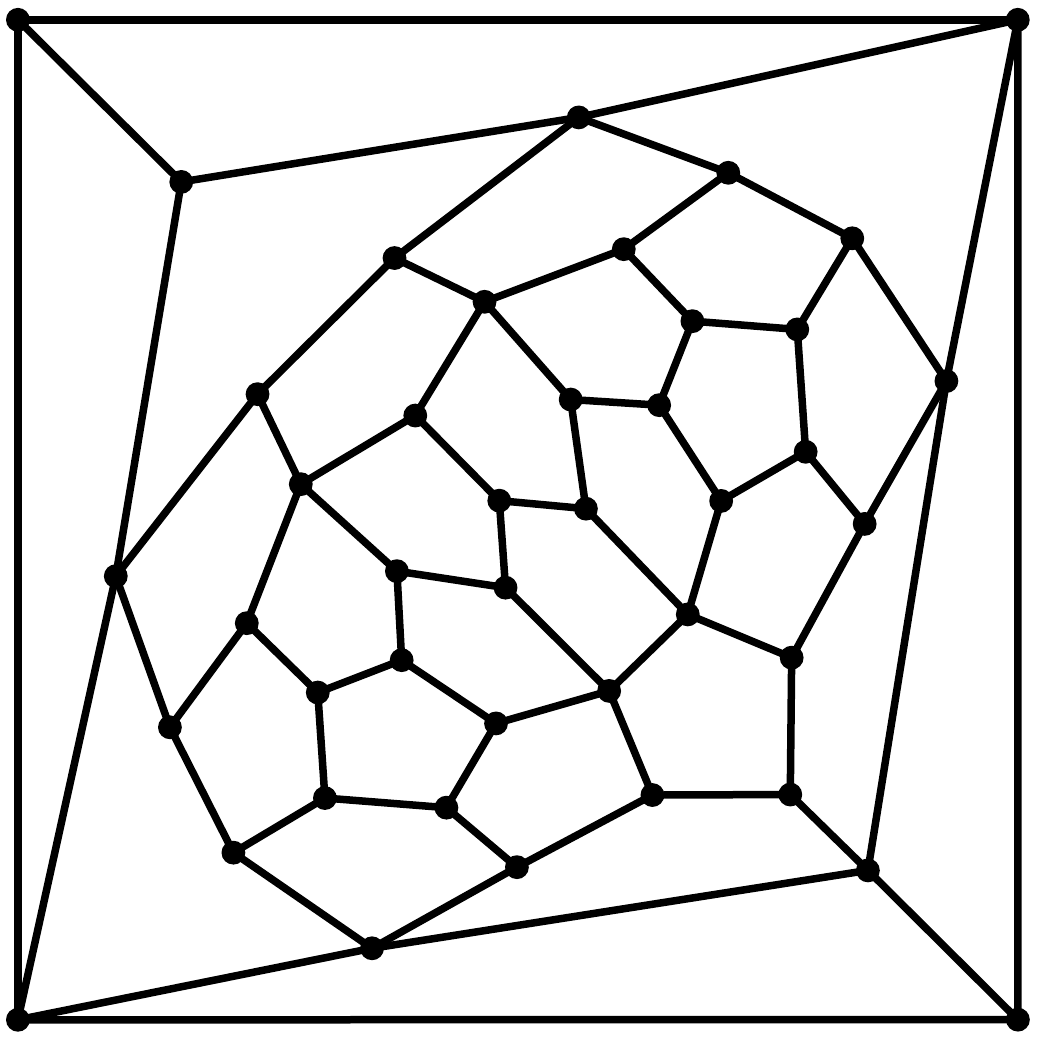}
\end{minipage}
\begin{minipage}[b]{0.45\linewidth}
\centering
\includegraphics[height=3cm]{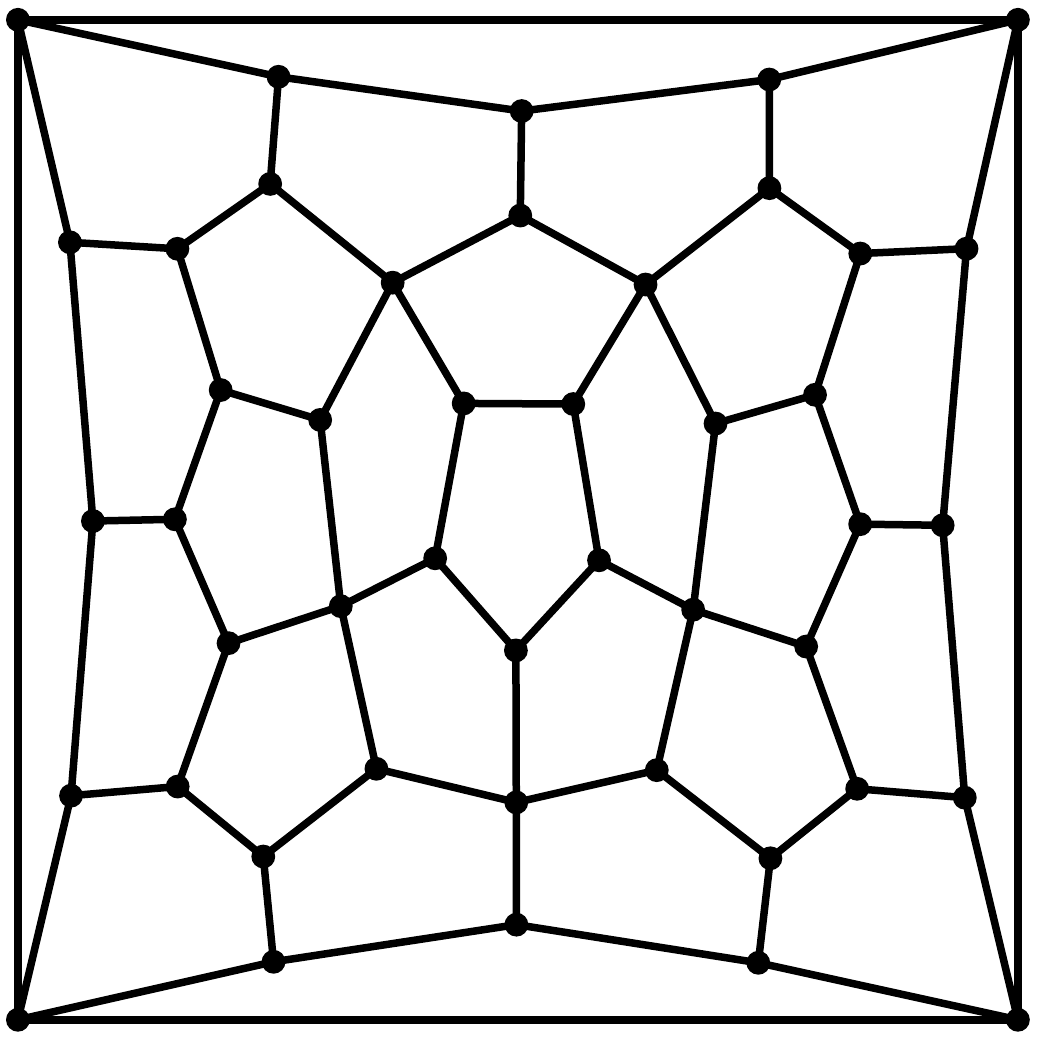}
\end{minipage}
\end{center}
\caption{Planar hypohamiltonian graphs of order 43 and 45, respectively}
\label{fig:records2}
\end{figure}

Whether there exists a planar hypohamiltonian graph on $41$ vertices remains an open question.

Araya and Wiener \cite{WA} further prove that there exist planar hypotraceable graphs on $162+4k$ vertices for every $k \ge 0$, and on $n$ vertices for every $n \ge 180$. To improve on that result, we make use of the following theorem, which is a slight modification of \cite[Lemma 3.1]{T1}.

\begin{theo}
\label{theo:trace}
If there are four planar hypohamiltonian graphs\/ $G_i = (V_i,E_i)$, $1 \leq i \leq 4$, each of which has a vertex of degree\/ $3$, then there is a planar hypotraceable graph of order\/ $|G_1|+|G_2|+|G_3|+|G_4|-6$.
\end{theo}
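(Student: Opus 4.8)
The plan is to adapt Thomassen's fragment-gluing construction. From each hypohamiltonian graph $G_i$ I would pick a vertex $v_i$ of degree $3$ with neighbours $a_i, b_i, c_i$ and form the \emph{fragment} $A_i := G_i - v_i$, retaining $a_i, b_i, c_i$ as distinguished \emph{terminals}. The hypohamiltonicity of $G_i$ translates into four properties of $A_i$ that drive the whole argument: (i) $A_i$ has no spanning path whose two endpoints are terminals, since reattaching $v_i$ along the two corresponding edges would otherwise yield a Hamiltonian cycle of $G_i$; (ii) for each terminal $t$, the graph $A_i - t$ has a spanning path joining the other two terminals, obtained from a Hamiltonian cycle of $G_i - t$, in which $v_i$ has degree $2$ and hence both its surviving edges lie on the cycle; (iii) $A_i$ itself has a spanning cycle through $a_i, b_i, c_i$, coming from a Hamiltonian cycle of $G_i - v_i$; and (iv) for every internal vertex $x$, the graph $A_i - x$ has a spanning path between some pair of terminals, from a Hamiltonian cycle of $G_i - x$.

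Next I would assemble the four fragments into a single plane graph $H$ by Thomassen's prescribed joining of terminals, whose net effect is to identify two pairs of terminals (while adding the connecting edges), so that $|V(H)| = \sum_i(|G_i|-1) - 2 = \sum_i |G_i| - 6$. Planarity is arranged by choosing each $v_i$ so that its fragment can be drawn with $a_i, b_i, c_i$ on a common face; the joining then takes place in those faces and introduces no crossings. Since each $G_i$ is planar and the degree-$3$ hypothesis supplies a suitable cubic vertex on a face, this step is routine.

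To show that $H$ is not traceable, I would suppose a Hamiltonian path $P$ exists and track how $P$ meets each fragment. Because the fragments communicate only through the few (identified) terminals, the restriction of $P$ to any $A_i$ is a union of vertex-disjoint subpaths whose endpoints lie among the terminals and the two ends of $P$. A counting argument over the four fragments, of which at most two can contain an end of $P$, forces at least one fragment to be spanned by a single path between two of its terminals, contradicting property (i). For the converse I would fix an arbitrary vertex $w$ and build a Hamiltonian path of $H - w$: if $w$ is internal to a fragment $A_j$ I use property (iv) to span $A_j - w$ between the appropriate terminals (and property (ii) if $w$ is a merged terminal shared by two fragments), while inside each of the remaining fragments I use property (iii), cutting its spanning cycle open at a terminal into a terminal-to-terminal spanning path; the pieces are then concatenated across the connecting terminals and edges into one spanning path of $H - w$.

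The main obstacle is precisely these two traceability verifications, both of which hinge on the exact terminal-joining pattern. In the non-traceability direction the delicate point is ruling out a ``transit'' fragment being traversed in several separate visits, which a hypothetical Hamiltonian path would need to do to avoid property (i); bounding the number of usable terminal-connections per fragment is what closes this gap. In the deletion direction the work is to check that the terminal-to-terminal spanning paths chosen in the individual fragments are mutually compatible with the joining skeleton, so that they link into a single path rather than a cycle or a disconnected union. By contrast, the planarity of the gluing, the vertex count, and the derivation of properties (i)--(iv) are all straightforward once the pattern is fixed.
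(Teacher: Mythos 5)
Your proposal is essentially Thomassen's fragment-gluing construction, which is exactly what the paper invokes: its entire proof consists of citing the proof of Lemma~1 of Thomassen's paper (reference [T1]) and observing that this construction, which does not itself address planarity, yields a planar graph when the four input graphs are planar. Your fragment properties (i)--(iv), the two-identification vertex count $\sum_i|G_i|-6$, and the planarity argument via placing the three terminals on a common face all match that route, and the delicate terminal-counting verifications you flag as the main obstacle are precisely the content of the proof the paper outsources to Thomassen.
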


\begin{proof}
The result follows from the proof of~\cite[Lemma~1]{T1} and the fact that the construction used in that proof (which does not address planarity) can be carried out to obtain a planar graph when all graphs $G_i$ are planar.
\end{proof}

\begin{theo}\label{theo:hypotraceable_154_and_above_155}
There exist planar hypotraceable graphs on\/ $154$ vertices, and on\/ $n$ vertices for every\/ $n \geq 156$.
\end{theo}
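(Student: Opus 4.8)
The plan is to invoke Theorem~\ref{theo:trace}, which manufactures a planar hypotraceable graph of order $|G_1|+|G_2|+|G_3|+|G_4|-6$ out of any four planar hypohamiltonian graphs $G_i$ that each possess a vertex of degree $3$. Two observations make this hypothesis free of charge and provide a rich supply of ingredients. First, every planar hypohamiltonian graph has minimum degree $3$ by the opening theorem of Section~\ref{sec:grinbergian}, so the degree-$3$ requirement is automatically satisfied and never constrains the choice of building blocks. Second, combining Theorem~\ref{theo:hypo_40} with Theorem~\ref{theo:3_hypohamiltonian_40_42}, a planar hypohamiltonian graph exists for every order lying in the set $S = \{40\} \cup \{k : k \geq 42\}$. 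Hence it suffices to realise each target order, namely $154$ and every $n \geq 156$, in the form $a+b+c+d-6$ with $a,b,c,d \in S$.

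First I would settle order $154$ by taking four copies of the $40$-vertex graph of Figure~\ref{fig:40-01}: since $40+40+40+40-6 = 154$, Theorem~\ref{theo:trace} immediately yields a planar hypotraceable graph on $154$ vertices. For the range $n \geq 156$, I would fix three copies of that same $40$-vertex graph and let a single fourth summand absorb the remaining order. Setting $d = n-114$, one checks that $40+40+40+d-6 = n$, and the constraint $n \geq 156$ is precisely equivalent to $d \geq 42$, so $d \in S$ and the required fourth graph exists by Theorem~\ref{theo:3_hypohamiltonian_40_42}. Applying Theorem~\ref{theo:trace} to these four graphs then produces a planar hypotraceable graph of each order $n \geq 156$, completing the construction.

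The only delicate point is the arithmetic of $S$, and it is also what accounts for the gap at $155$. Because no planar hypohamiltonian graph of order $41$ is known to exist, the attainable four-fold sums jump from $160$ (four $40$'s) directly to $162$ (three $40$'s together with one $42$), skipping $161$ entirely; consequently $155 = 161 - 6$ cannot be reached by this construction, which is exactly why the statement handles $154$ separately and only then begins the interval at $156$. I expect no genuine obstacle beyond this bookkeeping, since the substantive work — both the planarity of the four-graph gluing and the existence of all ingredient graphs — is already supplied by Theorems~\ref{theo:trace}, \ref{theo:hypo_40}, and~\ref{theo:3_hypohamiltonian_40_42}.
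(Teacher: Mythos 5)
Your proposal is correct and follows essentially the same route as the paper: order $154$ from four copies of the $40$-vertex graph, and every $n \geq 156$ from three copies of the $40$-vertex graph plus one planar hypohamiltonian graph of order $n-114 \geq 42$, all fed into Theorem~\ref{theo:trace}. The only cosmetic difference is that you justify the degree-$3$ hypothesis via the general fact $\delta(G)=3$ for planar hypohamiltonian graphs, whereas the paper simply notes that the specific graphs from Theorem~\ref{theo:3_hypohamiltonian_40_42} each have a cubic vertex; both are valid.
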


\begin{proof}
All the graphs obtained in the proof of Theorem~\ref{theo:3_hypohamiltonian_40_42} have a vertex with degree 3. Consequently, Theorem~\ref{theo:trace} can be applied to those graphs to obtain planar hypotraceable graphs of order $n$ for $n = 40+40+40+40-6 = 154$ and for $n \geq 40+40+40+42-6 = 156$.
\end{proof}

The graphs considered in this work have girth 4. In fact, by the following theorem we know that any planar hypohamiltonian graphs improving on the results of the current work must have girth 3 or 4. Notice that, perhaps surprisingly, there exist planar hypohamiltonian graphs of girth 3~\cite{T2}, and that a planar hypohamiltonian graph can have girth at most 5, since such a graph has a simple dual, and the average degree of a simple plane graph is less than 6.

\begin{theo}
There are no planar hypohamiltonian graphs with girth\/ $5$ on fewer than\/ $45$ vertices, and there is exactly one such graph on\/ $45$ vertices.
\end{theo}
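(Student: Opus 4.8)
The plan is to reduce the statement to the machinery of $\mathcal{H}^4_0$ developed for the generation. First I would note that, by the fact established at the start of Section~\ref{sec:grinbergian}, any hypohamiltonian planar graph $G$ is simple and $3$-connected. Since every face of a $2$-connected plane graph is bounded by a cycle, girth $5$ forces every face of $G$ to have size at least $5$; equivalently the dual $G^*$ has minimum degree at least $5$, and $G^*$ is simple because $G$ is simple and $3$-connected. Writing $n=|V(G)|$, which is exactly the number of faces of $G^*$, this says $G^*\in\mathcal{D}_5(n)$, hence $G=G^{**}\in\mathcal{M}^4_0(n)$, and because $G$ is hypohamiltonian, $G\in\mathcal{H}^4_0(n)$. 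Thus every hypohamiltonian planar graph of girth $5$ on $n$ vertices already occurs in $\mathcal{H}^4_0(n)$, and it suffices to sift $\mathcal{H}^4_0(n)$ for the members of girth exactly $5$.

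For the lower bound I would invoke the exhaustive computations reported just before Theorem~\ref{theo:no_hypo_less_than_40}. These give $\mathcal{H}^4_f(n)=\emptyset$ for every $f$ when $n\leq 39$, and for $40\leq n\leq 43$ the only nonempty families all have a positive number of $4$-faces; in particular $\mathcal{H}^4_0(n)=\emptyset$ for all $n\leq 43$. It then remains to treat $n=44$: I would generate $\mathcal{D}_5(44)$ with \emph{plantri}, dualize, and test each dual for hypohamiltonicity to obtain $\mathcal{H}^4_0(44)$, then verify that none of its members has girth $5$ (I expect in fact $\mathcal{H}^4_0(44)=\emptyset$). Combined with the reduction above, this yields that no hypohamiltonian planar graph of girth $5$ exists on fewer than $45$ vertices.

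For the $45$-vertex case I would compute $\mathcal{H}^4_0(45)$ in the same manner — enumerate $\mathcal{D}_5(45)$, form the duals, and retain the hypohamiltonian ones — and then keep exactly those of girth $5$. Here one must filter on the genuinely computed girth rather than merely on minimum face size, since a $3$-connected plane graph all of whose faces have size at least $5$ could in principle still carry a short separating cycle; the surviving list is precisely the set of girth-$5$ hypohamiltonian graphs on $45$ vertices. I would report that this list has a single element, exhibit it explicitly, and confirm its properties independently, establishing non-Hamiltonicity via Grinberg's Theorem (or the hamiltonicity search) and giving a vertex-omitting cycle for each vertex.

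The main obstacle is computational rather than conceptual: correctness hinges entirely on the completeness of the \emph{plantri} enumerations of $\mathcal{D}_5(44)$ and $\mathcal{D}_5(45)$ and on the exhaustiveness of the hypohamiltonicity test over their duals. The reduction to $\mathcal{H}^4_0$ is routine; the delicate points are guaranteeing that the generation at $n=44$ and $n=45$ really is exhaustive, so that ``none below $45$'' and ``exactly one at $45$'' are not artifacts of an incomplete search, and remembering to test the actual girth rather than the face sizes when isolating the girth-$5$ graphs.
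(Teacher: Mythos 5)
Your proposal is correct, but it takes a genuinely different route from the paper. The paper's proof is a single standalone computation: \emph{plantri} is used to generate all planar graphs with a simple dual and girth 5 on up to 45 vertices directly, these are tested for hypohamiltonicity, and exactly one graph (of order 45, with automorphism group of order 4) survives; the $\mathcal{H}^4_0$ machinery of Section~\ref{sec:generation} is never invoked. Your reduction --- girth 5 plus 3-connectivity forces every face to have size at least 5, hence $G^*\in\mathcal{D}_5(n)$ and $G=G^{**}\in\mathcal{H}^4_0(n)$ --- is sound, and it buys you all orders $n\le 43$ for free from the computations reported before Theorem~\ref{theo:no_hypo_less_than_40} (which include $f=0$ for $40\le n\le 43$, and note that $\mathcal{D}_5(n)=\emptyset$ for $n<20$ settles the very small orders), so that only $n=44$ and $n=45$ need fresh runs. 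What it costs is that those runs range over a strictly larger class than the paper's: membership in $\mathcal{M}^4_0(n)$ constrains only face sizes, so the duals of members of $\mathcal{D}_5(44)$ and $\mathcal{D}_5(45)$ may have girth 3 or 4 via non-facial short cycles, and all of them must be put through the expensive hypohamiltonicity test before your final girth filter; also, your side remark that $\mathcal{H}^4_0(44)=\emptyset$ is plausible but is nowhere confirmed by the paper, though your argument does not need it. Your insistence on filtering by genuine girth rather than by minimum face size is exactly the right subtlety for your route, and it is precisely the issue the paper sidesteps by building the girth constraint into the generation itself. Both arguments rest equally on the exhaustiveness of \emph{plantri} and on the correctness of the hypohamiltonicity test, so neither is more rigorous than the other; yours is more economical below order 44 (reusing existing data) and less economical at orders 44 and 45.
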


\begin{proof}
The program \emph{plantri} \cite{BM07} can be used to construct all planar graphs with a simple dual, girth 5, and up to 45 vertices. By checking these graphs, it turns out that only a single graph of order 45 is hypohamiltonian. That graph, which has an automorphism group of order 4, is shown in Figure~\ref{fig:45-P}.
\end{proof}

\begin{figure}[htbp]
\begin{center}
\includegraphics[height=3cm]{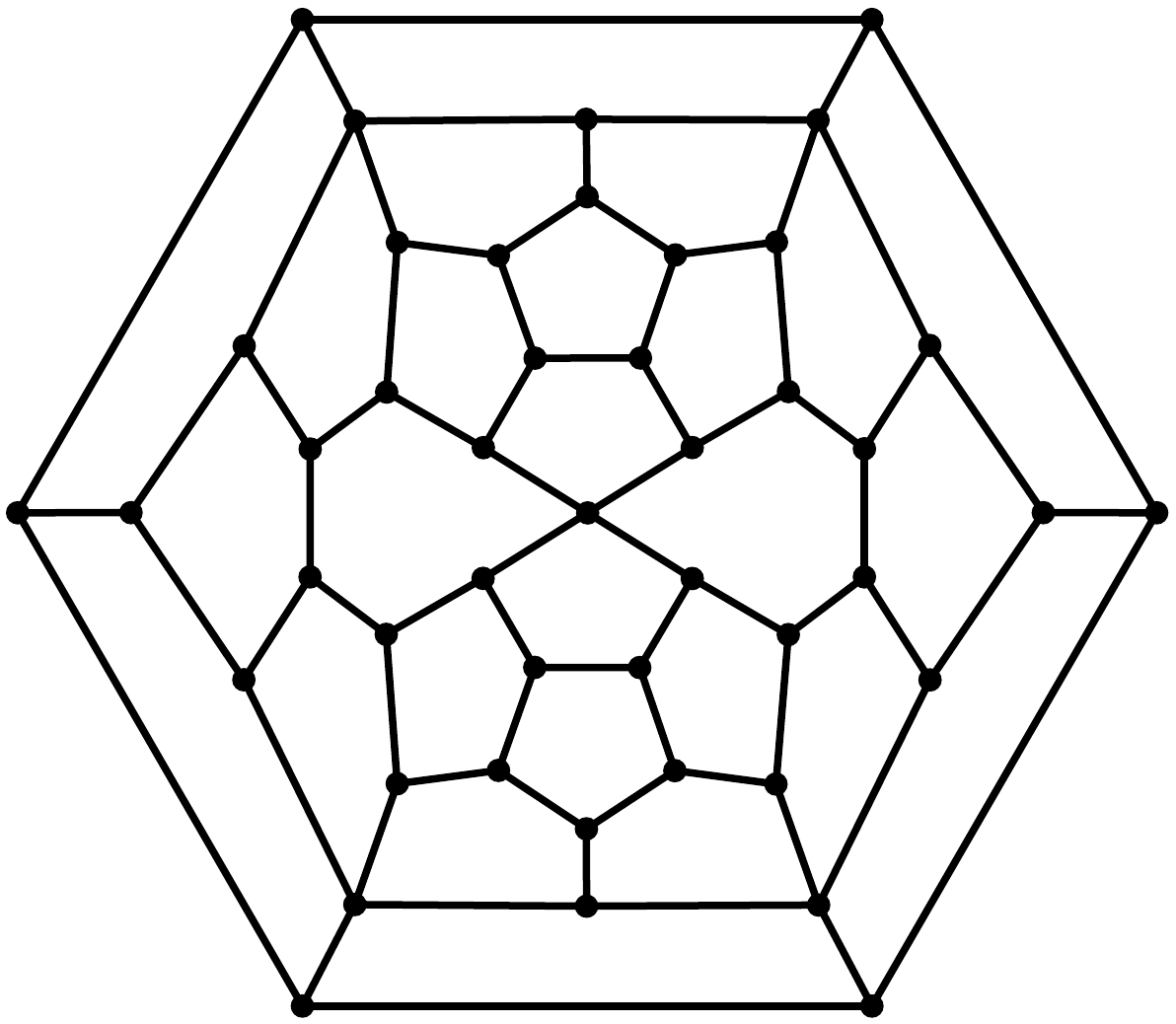}\\
\end{center}
\caption{A planar hypohamiltonian graph with girth 5 and 45 vertices}
\label{fig:45-P}
\end{figure}

Let $H$ be a cubic graph and $G$ be a graph containing a cubic vertex $w \in V(G)$. We say that we \emph{insert $G$ into $H$}, if we replace every vertex of $H$ with $G - w$ and connect the endpoints of edges in $H$ to the neighbours of $w$.

\begin{cor}
We have $$\overline{C^1_3}\leq 40, \quad \overline{C^2_3}\leq 2625, \quad \overline{P^1_3}\leq 156, \quad \mbox{ and } \quad \overline{P^2_3}\leq 10350.$$

\begin{proof}
The first of the four inequalities follows immediately from Theorem~\ref{theo:hypo_40}. In the following, let $G$ be the planar hypohamiltonian graph from Figure~\ref{fig:40-01}.

For the second inequality, insert $G$ into Thomassen's hypohamiltonian graph $G_0$ from \cite[p.~38]{T5}. This means that each vertex of $G_0$ is replaced by $G$ minus some vertex of degree 3. Since every pair of edges in $G_0$ is missed by a longest cycle \cite[p.~156]{SZ}, in the resulting graph $G'$ any pair of vertices is missed by a longest cycle. This property is not lost if all edges originally belonging to $G_0$ are contracted.

In order to prove the third inequality, insert $G$ into $K_4$. We obtain a graph in which every vertex is avoided by a path of maximal length.

For the last inequality, consider the graph $G_0$ from the second paragraph of this proof and insert $G_0$ into $K_4$ to obtain $H$. Now insert $G$ into $H$. Finally, contract all edges which originally belonged to $H$.
\end{proof}
\end{cor}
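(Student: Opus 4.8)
The plan is to establish each inequality by exhibiting a planar $3$-connected graph of the stated order in which every set of the prescribed size is avoided by a longest cycle (resp.\ longest path). All four graphs will be built from the planar hypohamiltonian graph $G$ of order $40$ of Figure~\ref{fig:40-01}, three of them through the insertion operation just defined. The bound $\overline{C^1_3}\le 40$ needs nothing beyond Theorem~\ref{theo:hypo_40}: since $G$ is non-Hamiltonian its longest cycles have at most $39$ vertices, while for each vertex $v$ the Hamiltonicity of $G-v$ supplies a cycle on $39$ vertices of $G$ avoiding $v$. Hence the longest cycles of $G$ have exactly $39$ vertices and every single vertex lies off one of them; as $G$ is planar and $3$-connected (Section~\ref{sec:grinbergian}), the bound follows.

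The remaining three bounds all rest on one property of the gadget $G-w$, where $w$ is a cubic vertex of $G$ with neighbours $a,b,c$. First I would note that hypohamiltonicity forbids a Hamiltonian path of $G-w$ joining two of $a,b,c$, since such a path would close through $w$ into a Hamiltonian cycle of $G$; conversely, deleting $w$ from a Hamiltonian cycle of $G-a$, which is forced to use the edges $wb$ and $wc$, yields a Hamiltonian path of $(G-w)-a$ from $b$ to $c$. Thus any traversal of a gadget from one port to another omits at least one vertex, and a traversal omitting exactly the third port is always available. This one-vertex-per-gadget deficiency is the engine of the construction.

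For $\overline{C^2_3}$ I would insert $G$ into Thomassen's planar cubic graph $H$ on $70$ vertices \cite[p.~38]{T5} and contract its $105$ edges, obtaining a planar $3$-connected graph on $39\cdot 70-105=2625$ vertices in which each contracted $H$-edge becomes a junction vertex. Longest cycles of this graph correspond to longest cycles of $H$ routed through the gadgets, each traversed gadget shedding one vertex; since by \cite{SZ} every pair of edges of $H$ is avoided by a longest cycle, the corresponding longest cycles avoid the two associated junctions, and the freedom to shed any interior vertex lets one dodge an arbitrary pair. For $\overline{P^1_3}$ I would insert $G$ into $K_4$ without contracting, giving a planar $3$-connected graph on $4\cdot 39=156$ vertices; here the $K_4$-skeleton makes longest \emph{paths} the relevant objects, and the gadget deficiency forces each longest path to omit vertices while leaving enough slack to omit any prescribed one, as in the constructions of \cite{T1,T5}. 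Finally, for $\overline{P^2_3}$ I would first insert $H$ into $K_4$ to produce a planar cubic graph $H'$ on $4\cdot 69=276$ vertices, converting the cycle property of $H$ into the corresponding path property, and then insert $G$ into $H'$ and contract its $414$ edges, obtaining $276\cdot 39-414=10350$ vertices; composing the two transfers turns ``pair of edges avoided by a longest cycle'' into ``pair of vertices avoided by a longest path''.

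The routine parts are the arithmetic and the check that each insertion and contraction preserves planarity and $3$-connectivity, the latter following from the $3$-connectivity of $G$, $K_4$, $H$ and $H'$ together with the fact that removing a cubic vertex leaves its three neighbours on a common face. The real obstacle is the longest-cycle/longest-path correspondence between an inserted graph and its base: one must show that no cycle or path can beat the port-to-port routing through the gadgets, in particular that re-entering a gadget cannot recover the lost vertex, and that the single omission in each gadget can be positioned so as to dodge the two designated vertices simultaneously with the constraint inherited from $H$. This is exactly the composition analysis of \cite{T1,T5,SZ}, which I would invoke rather than reprove.
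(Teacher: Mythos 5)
Your proposal is correct and follows essentially the same route as the paper: the first bound from Theorem~\ref{theo:hypo_40} together with $3$-connectivity, and the other three by inserting $G$ into Thomassen's $70$-vertex graph $H$, into $K_4$, and into $H'$ (obtained by inserting $H$ into $K_4$), contracting the base-graph edges where appropriate, with the transfer of the longest-cycle/longest-path avoidance properties delegated to \cite{T1,T5,SZ} just as the paper does. Your added details --- the port-to-port gadget lemma and the explicit vertex counts $39\cdot 70-105=2625$ and $276\cdot 39-414=10350$ --- are consistent with the paper's claimed bounds.
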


For a more detailed discussion of the above corollary and proof, please consult \cite{Z3} and the recent survey \cite{SZZ}.

\newcommand{\ahgh}{16mm}
\begin{figure}[htbp]
\begin{center}
\includegraphics[height=\ahgh]{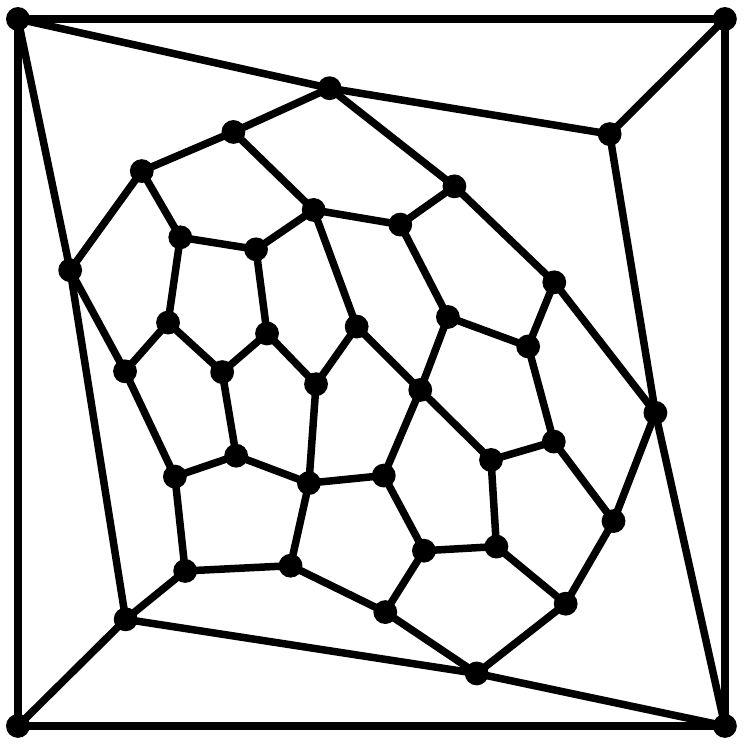}
\includegraphics[height=\ahgh]{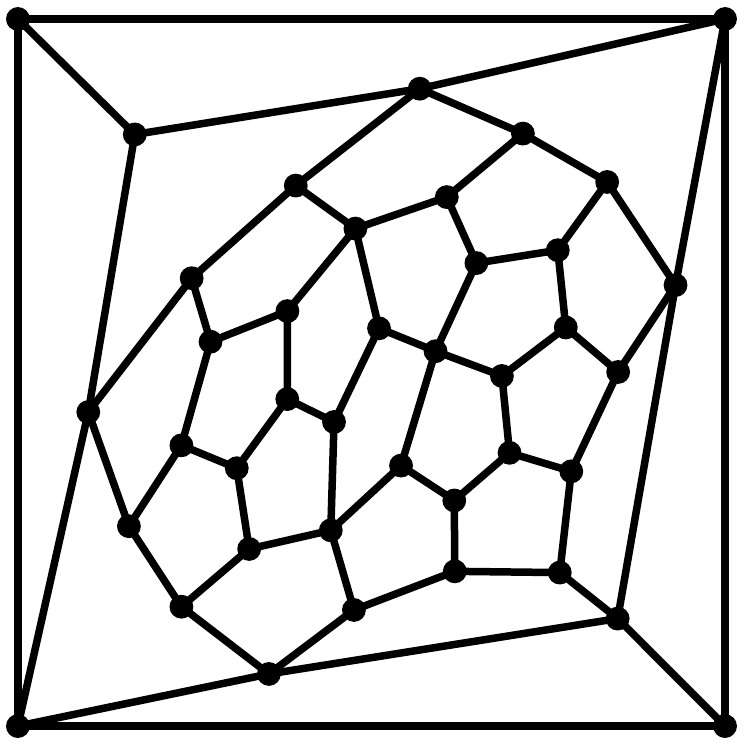}
\includegraphics[height=\ahgh]{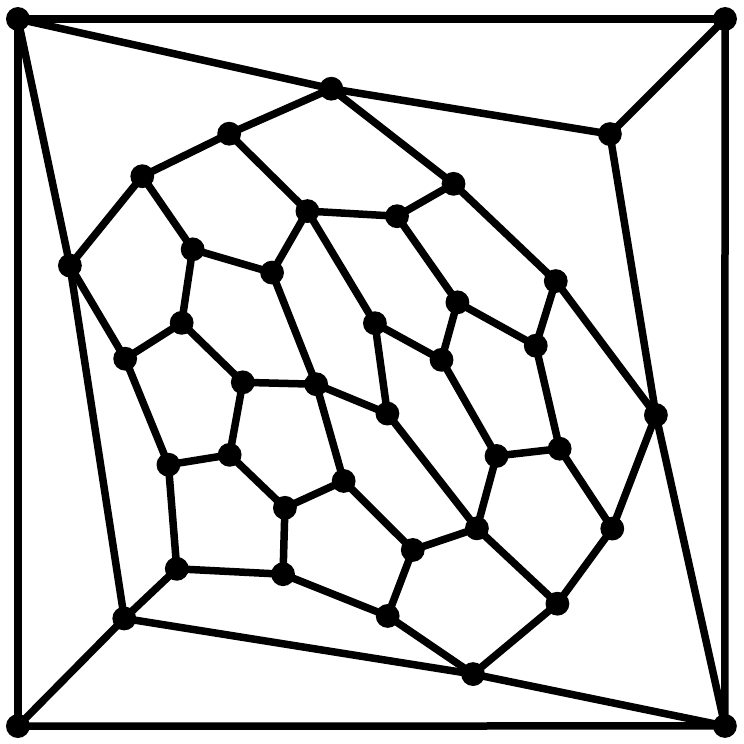}
\includegraphics[height=\ahgh]{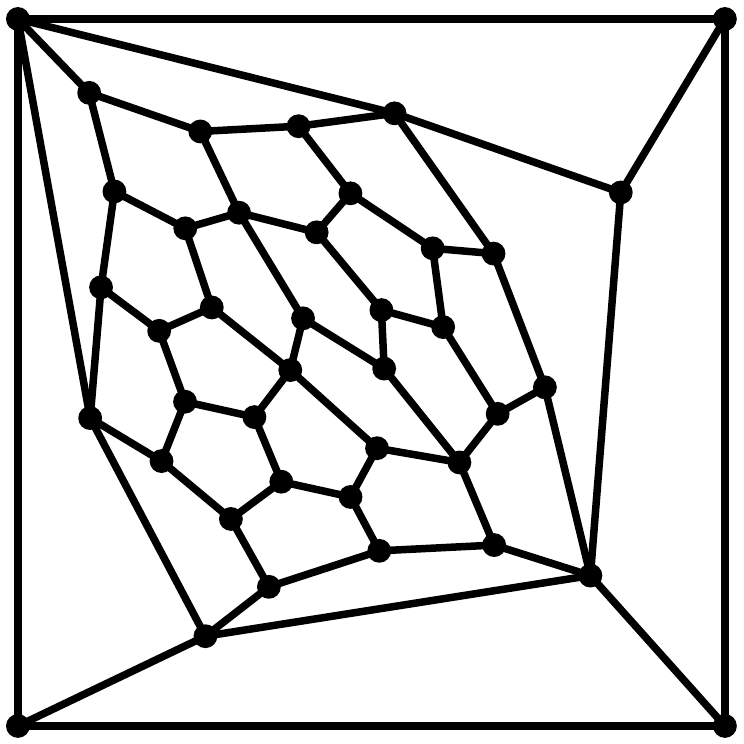}
\includegraphics[height=\ahgh]{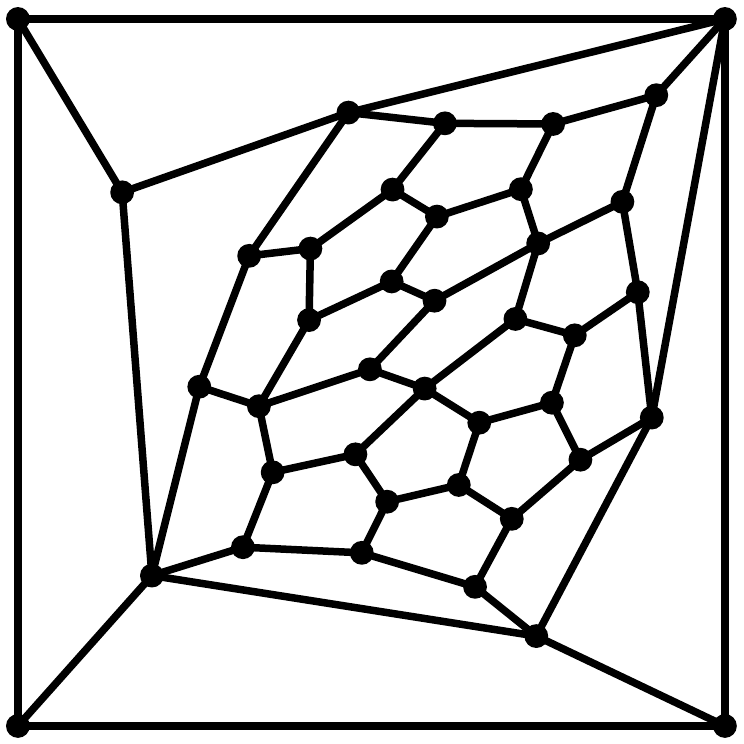}
\includegraphics[height=\ahgh]{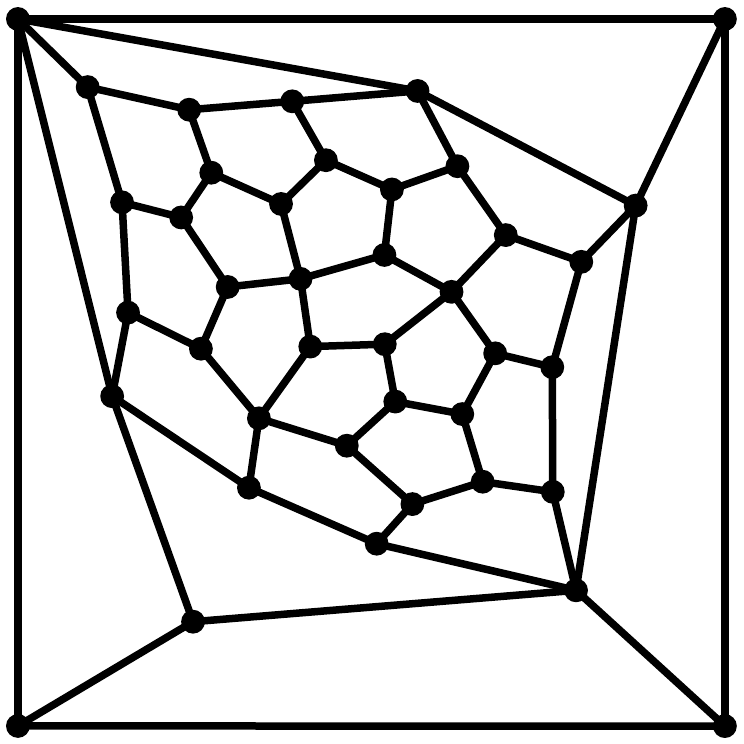}
\includegraphics[height=\ahgh]{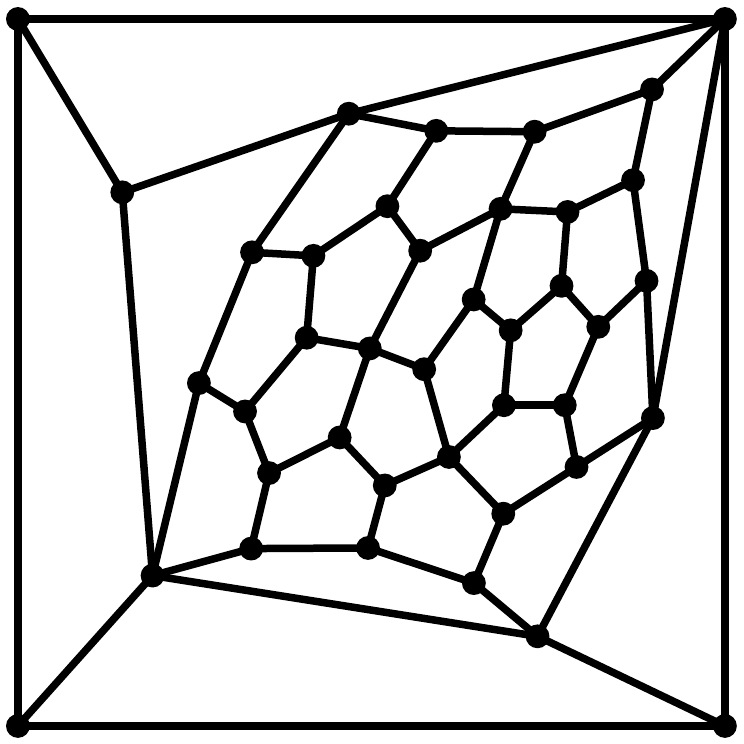}
\includegraphics[height=\ahgh]{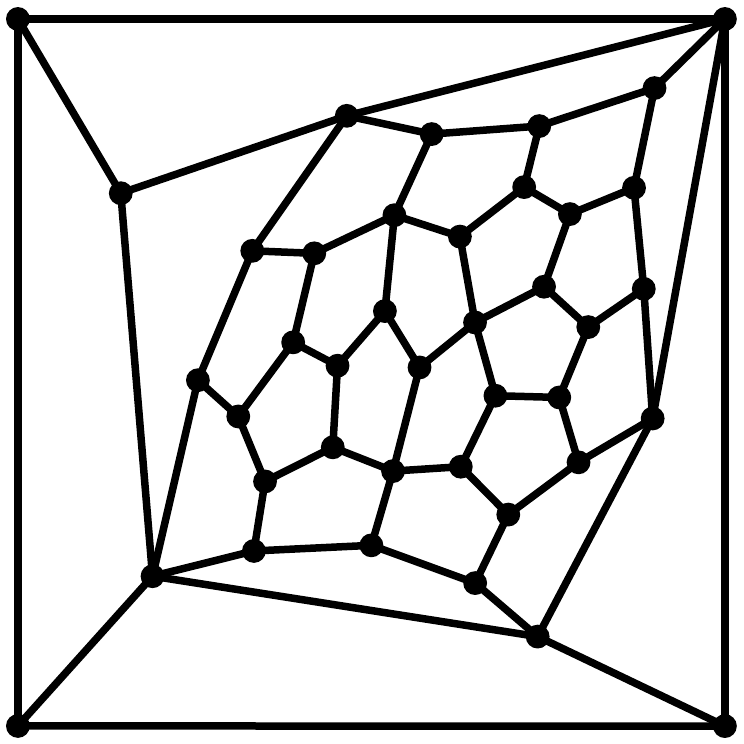}
\\\vspace{1mm}
\includegraphics[height=\ahgh]{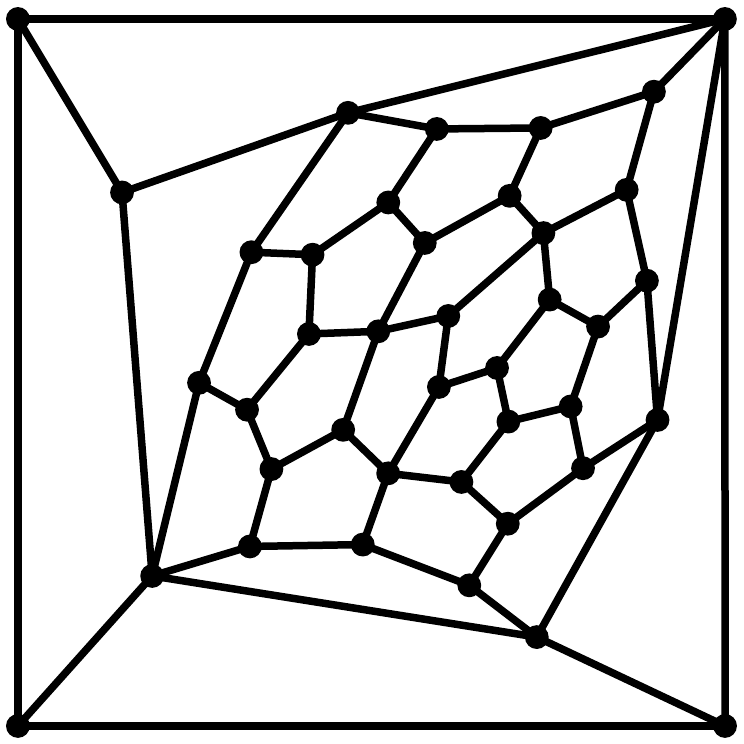}
\includegraphics[height=\ahgh]{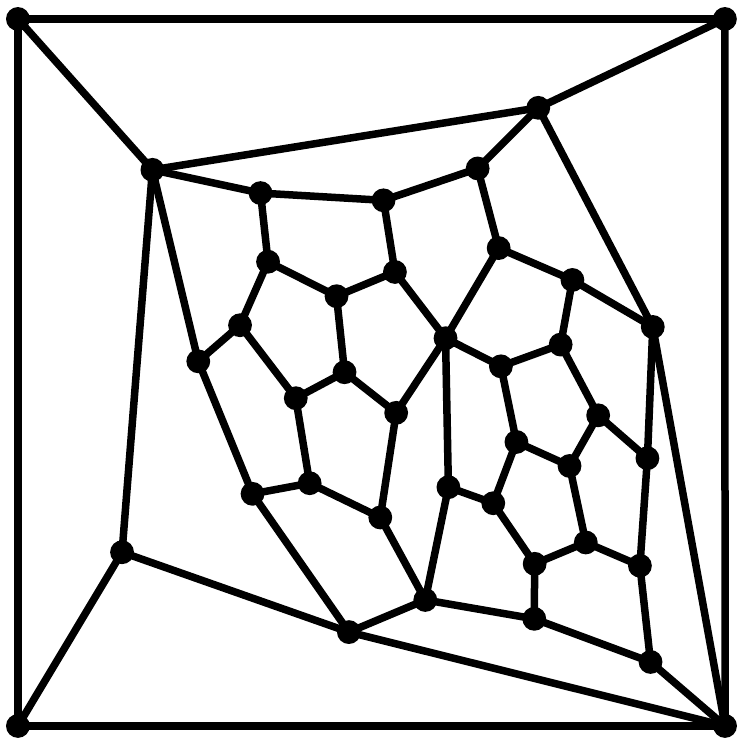}
\includegraphics[height=\ahgh]{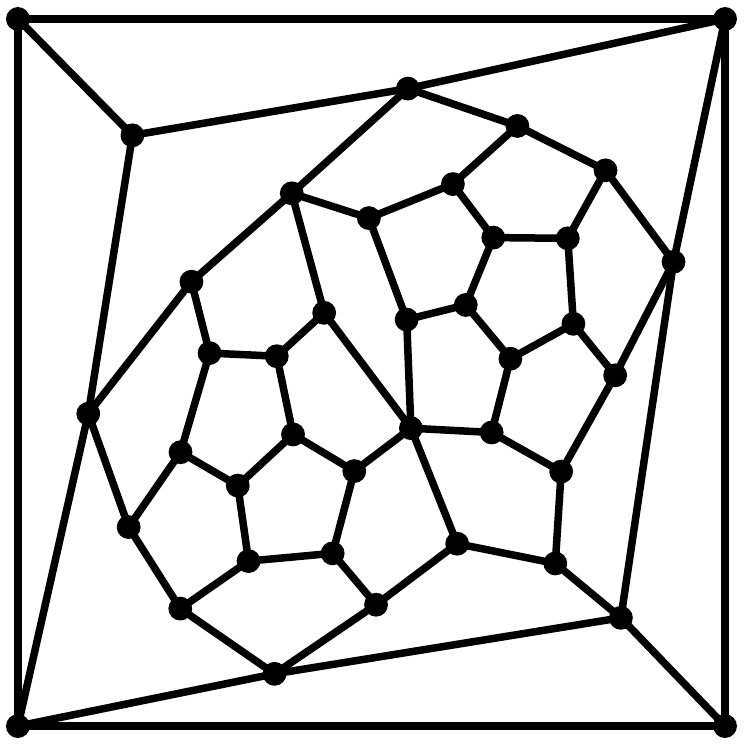}
\includegraphics[height=\ahgh]{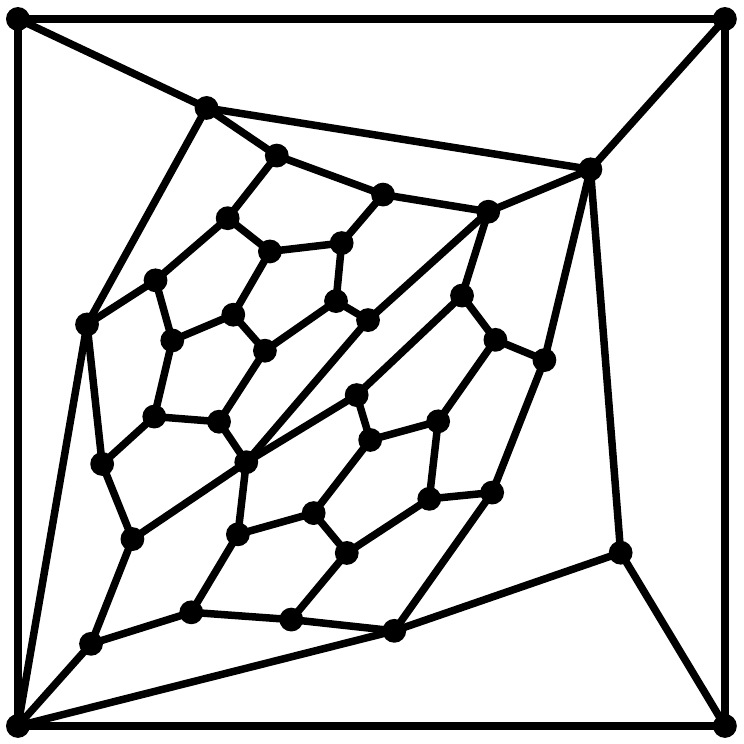}
\includegraphics[height=\ahgh]{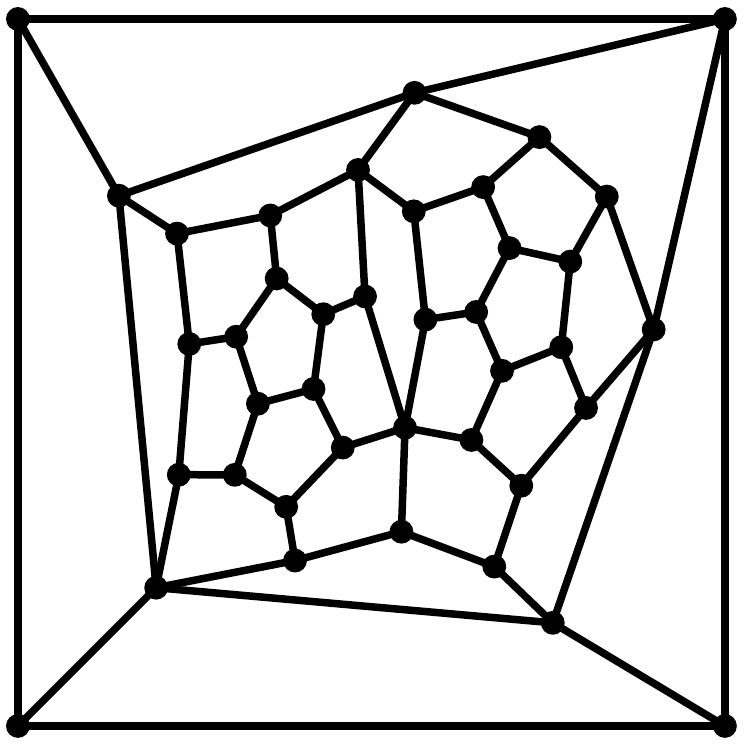}
\includegraphics[height=\ahgh]{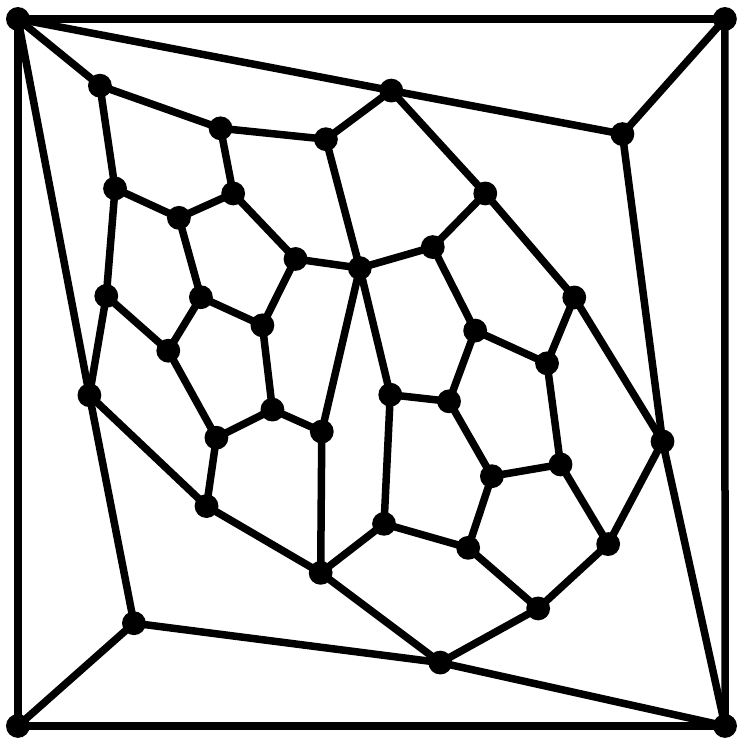}
\includegraphics[height=\ahgh]{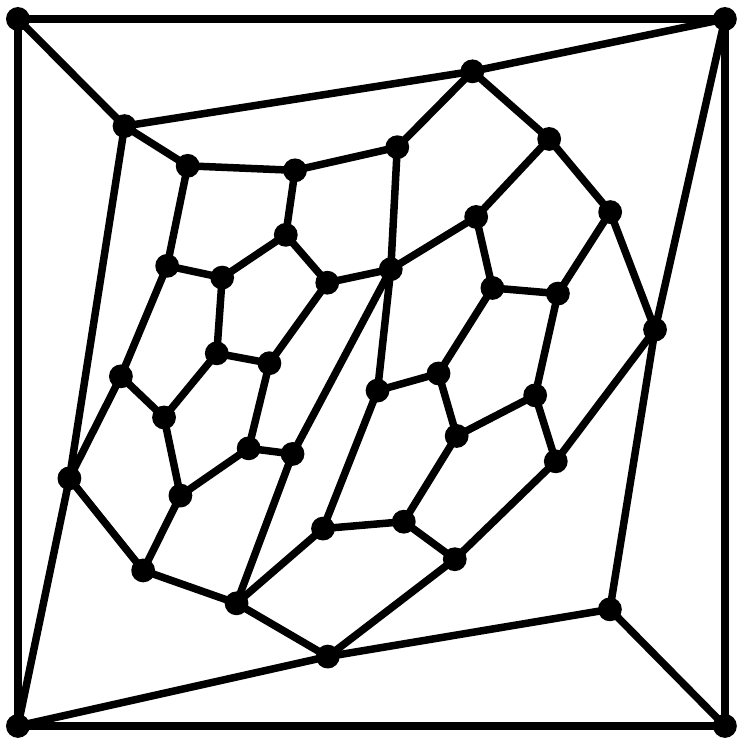}
\includegraphics[height=\ahgh]{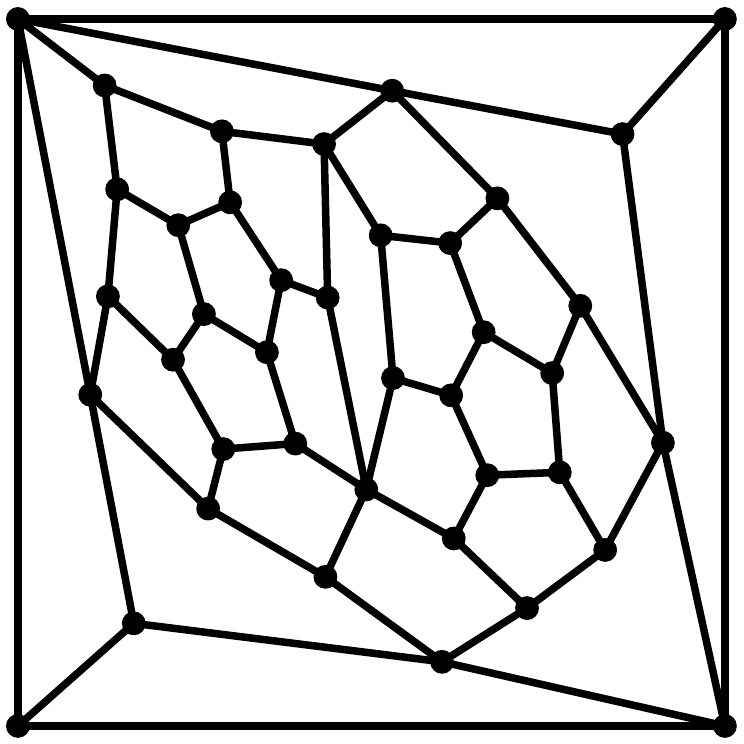}
\\\vspace{1mm}
\includegraphics[height=\ahgh]{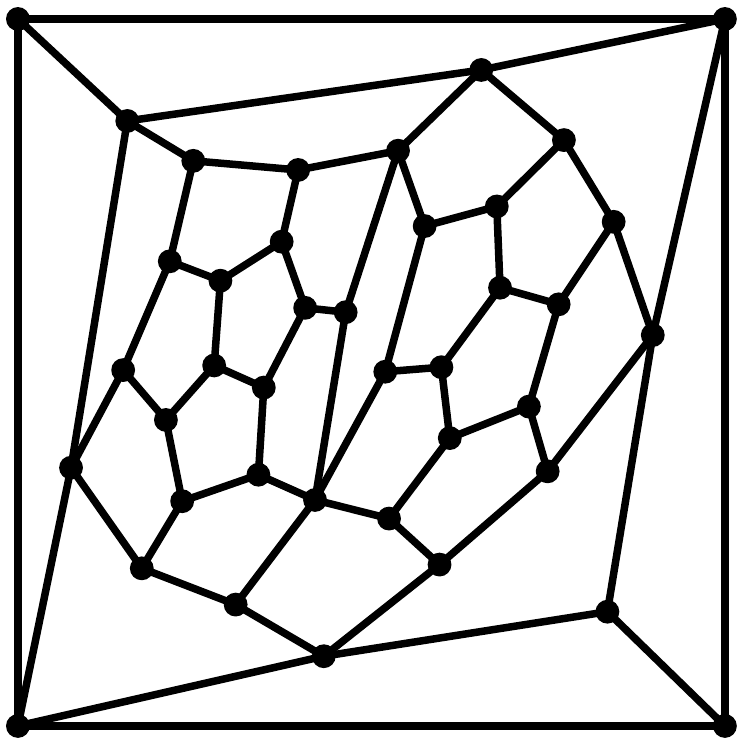}
\includegraphics[height=\ahgh]{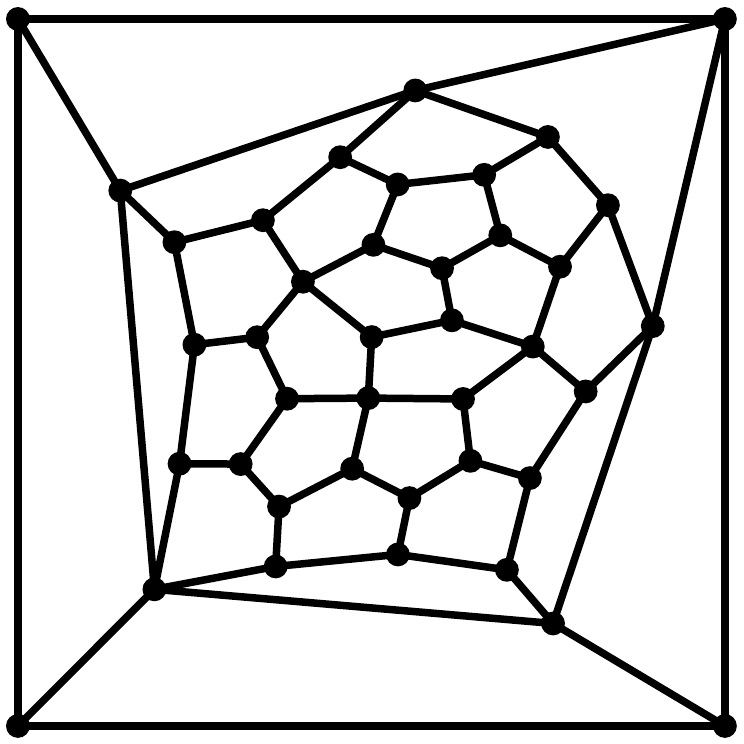}
\includegraphics[height=\ahgh]{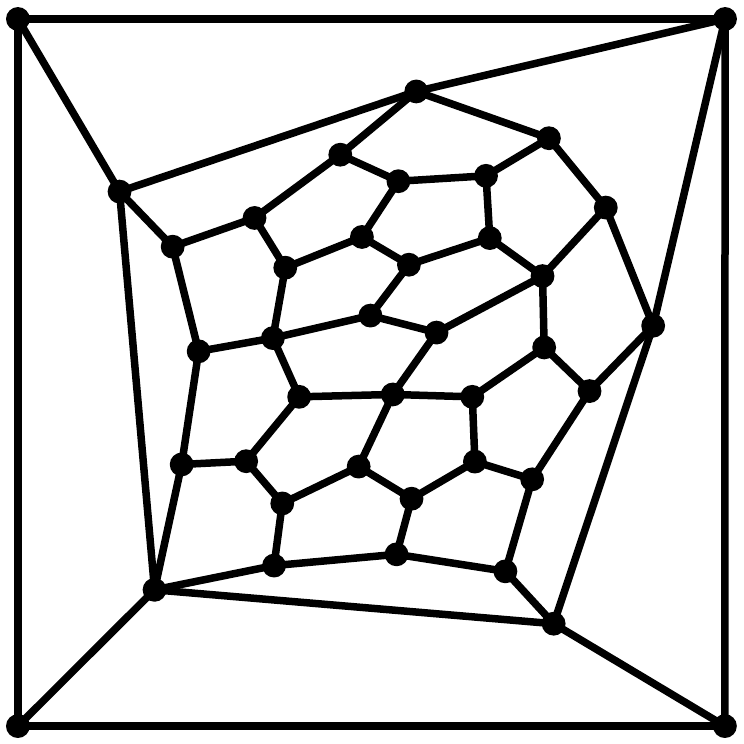}
\includegraphics[height=\ahgh]{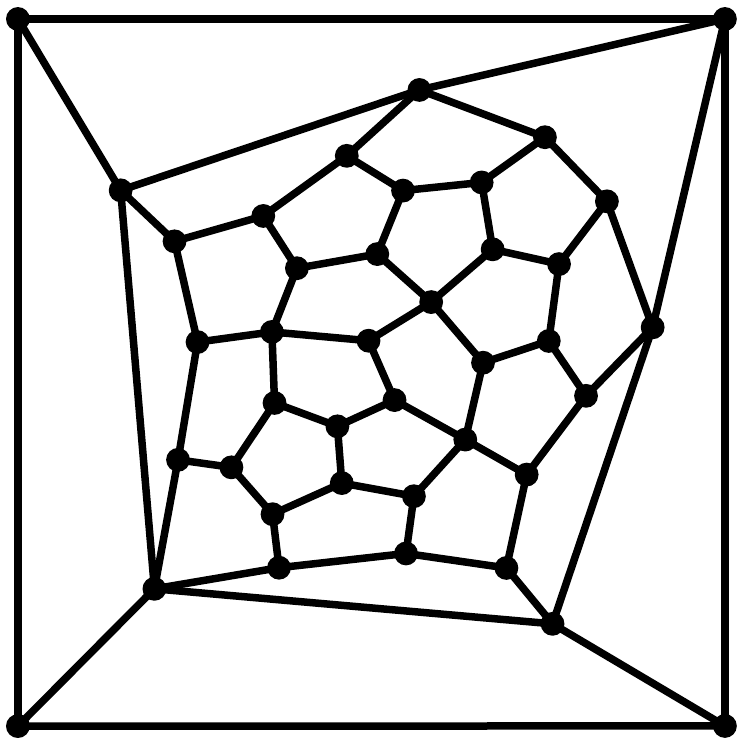}
\includegraphics[height=\ahgh]{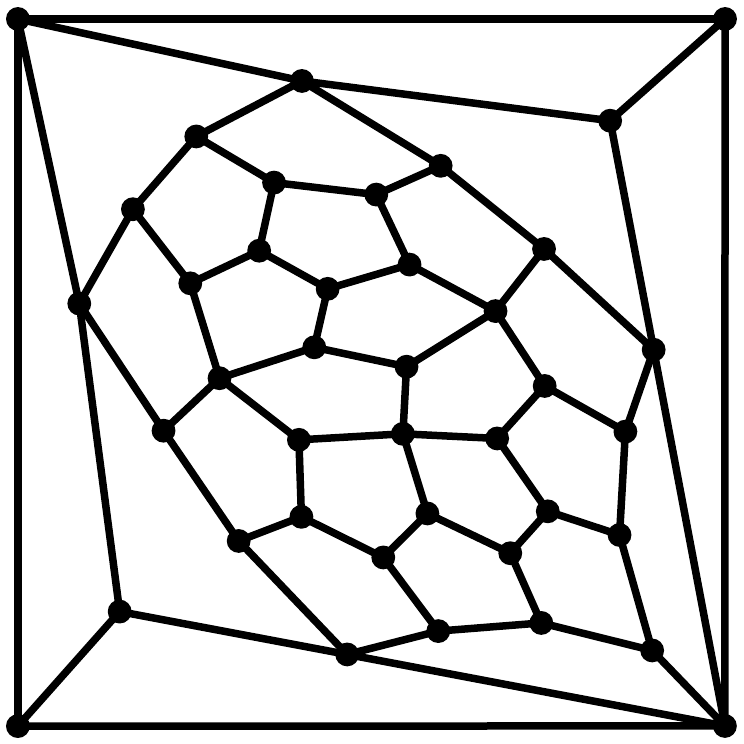}
\includegraphics[height=\ahgh]{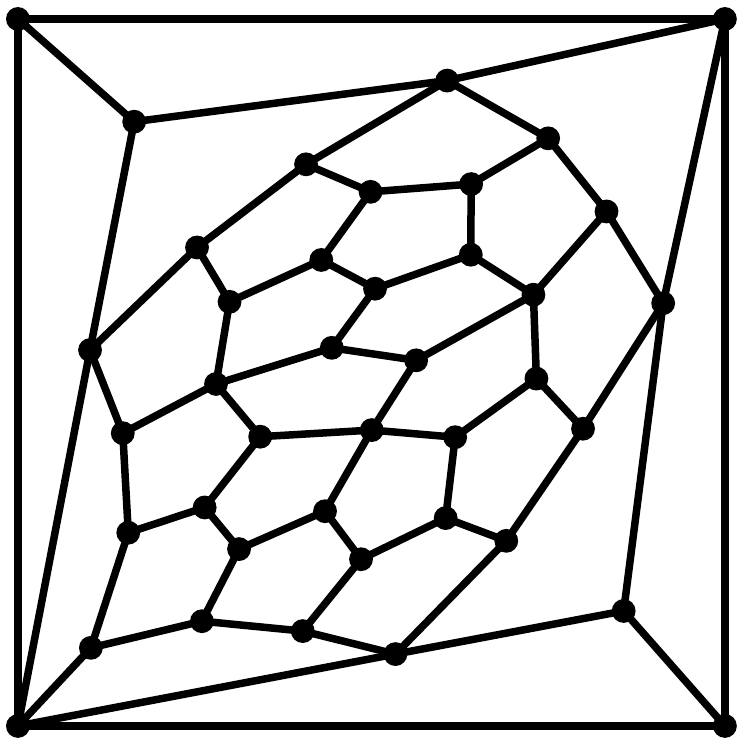}
\includegraphics[height=\ahgh]{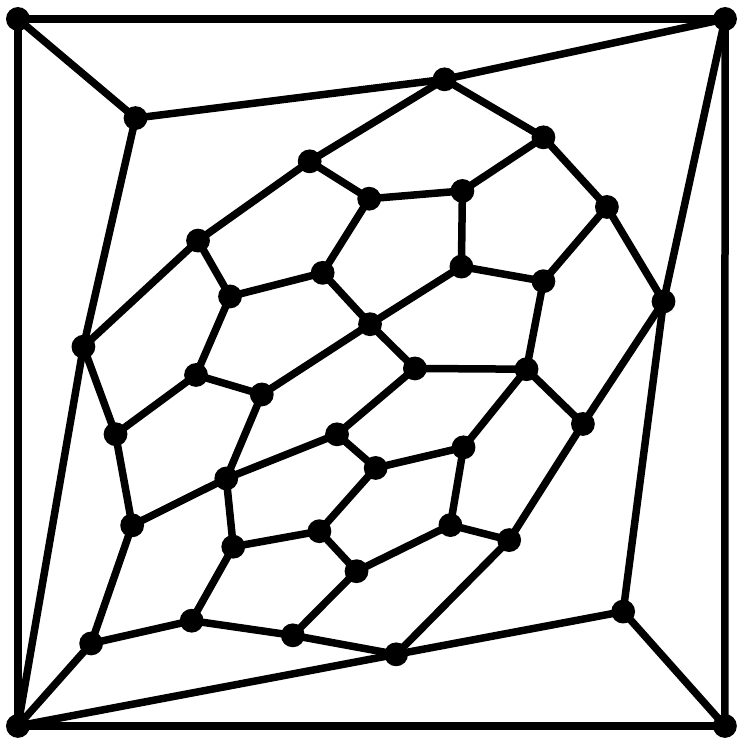}
\includegraphics[height=\ahgh]{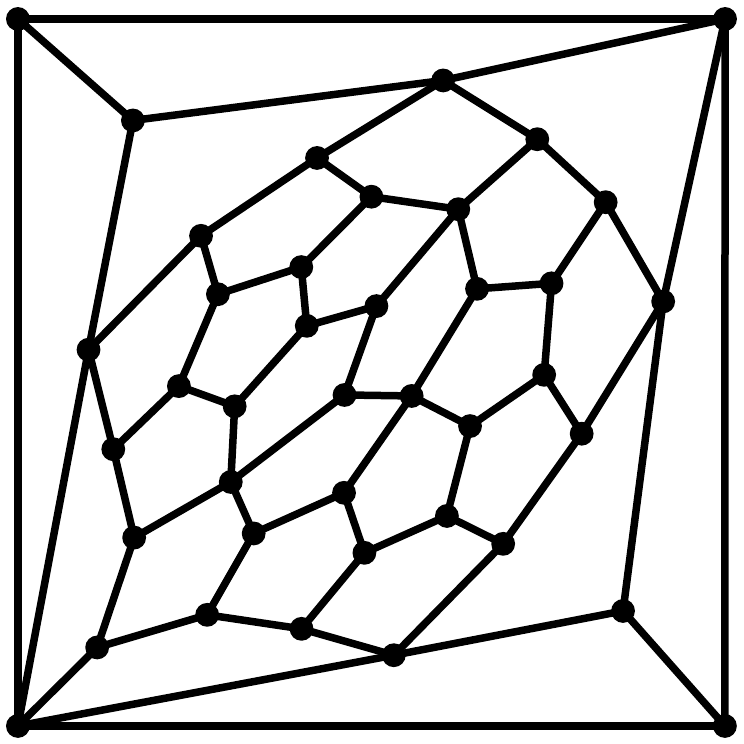}
\end{center}
\caption{The rest of the hypohamiltonian graphs on 40 vertices}
\label{fig:40-all}
\end{figure}

\section{Conclusions}\label{sec:con}
\noindent Despite the new planar hypohamiltonian graphs discovered in the current work, there is still a wide gap between the order of the smallest known graphs and the best lower bound known for the order of the smallest such graphs, which is 18 \cite{AMW}. One explanation for this gap is the fact that no extensive computer search has been carried out to increase the lower bound.

It is encouraging though that the order of the smallest known planar hypohamiltonian graph continues to decrease. It is very difficult to conjecture anything about the smallest possible order, and the possible extremality of the graphs discovered here. It would be somewhat surprising if no extremal graphs would have nontrivial automorphisms (indeed, the graphs of order 40 discovered in the current work have no nontrivial automorphisms). An exhaustive study of graphs with prescribed automorphisms might lead to the discovery of new, smaller graphs.

The smallest known \emph{cubic} planar hypohamiltonian graph has 70 vertices~\cite{AW}. We can hope that the current work inspires further progress in that problem, too.

\section*{Acknowledgements}
\noindent The first two authors were supported by the Australian Research Council. The work of the third author was supported in part by the Academy of Finland under the Grant No.\ 132122; the work of the fourth author was supported by the same grant, by the GETA Graduate School, and by the Nokia Foundation.

%% The Appendices part is started with the command \appendix;
%% appendix sections are then done as normal sections
%% \appendix

%% \section{}
%% \label{}

%% References
%%
%% Following citation commands can be used in the body text:
%% Usage of \cite is as follows:
%%   \cite{key}         ==>>  [#]
%%   \cite[chap. 2]{key} ==>> [#, chap. 2]
%%

%% References with bibTeX database:

\bibliographystyle{elsart-num-sort}
%\section*{References}
\bibliography{Hypoham40}

%% Authors are advised to submit their bibtex database files. They are
%% requested to list a bibtex style file in the manuscript if they do
%% not want to use elsarticle-num.bst.

%% References without bibTeX database:

% \begin{thebibliography}{00}

%% \bibitem must have the following form:
%%   \bibitem{key}...
%%

% \bibitem{}

% \end{thebibliography}

\end{document}